\newtheorem{thm}{Theorem}[section]
\newtheorem{thmx}{Theorem}[section]
\newtheorem{prop}[thm]{Proposition}
\newtheorem{lem}[thm]{Lemma}
\theoremstyle{definition}
\newtheorem{defn}[thm]{Definition}
\newtheorem{ex}[thm]{Example}
\newtheorem{rmk}[thm]{Remark}
\newtheorem{assu}{Assumption}
\newtheorem{conv}{Convention}
\newtheorem*{conv*}{Convention}
\newtheorem*{nota*}{Notation}
\newtheorem*{fact*}{Fact}
\newtheorem*{ind*}{Induction hypothesis}
\newtheorem*{goal*}{Goal}
\newtheorem*{ques*}{Question}
\newtheorem{ques}{Question}
\newtheorem*{consq*}{Consequence}
\newcommand{\C}{\mathbb C}
\newcommand{\Z}{\mathbb Z}
\newcommand{\N}{\mathbb N}
\newcommand{\Pp}{\mathbb P}
\newcommand{\X}{\mathfrak X}
\newcommand{\sO}{\mathcal O}
\newcommand{\NE}{NE}
\newcommand{\bM}{\overline{\mathcal M}}
\newcommand{\FF}{\mathfrak F}
\newcommand{\comp}{\mathfrak c}
\title{Chern classes and Gromov--Witten theory of projective bundles}
\author[H.~Fan]{Honglu Fan}
\email{honglu.fan@math.ethz.ch}
\begin{document}

\begin{abstract}
We prove that the Gromov--Witten theory (GWT) of a projective bundle can be determined by the Chern classes and the GWT of the base. It completely answers a question raised in a previous paper. Its consequences include that the GWT of the blow-up of X at a smooth subvariety Z is uniquely determined by GWT of X, Z plus some topological data.
\end{abstract}

\maketitle
\tableofcontents

\setcounter{section}{-1}

\section{Introduction}

In \cite{FL}, the following question was raised.
\begin{ques*}
Let $S$ be a smooth variety and $V$ be a vector bundle. Is the Gromov--Witten invariants of $\Pp_S(V)$ uniquely determined by that of the base $S$ and the total Chern class $c(V)$?
\end{ques*}

The main result of this paper is an affirmative answer to the above question in all genera.
\begin{thmx}[=Theorem \ref{main}]\label{thmA}
Let $S$ be a smooth projective variety, and $V_1,V_2$ be two vector bundles on $S$. If $c(V_1)=c(V_2)$, we have the following equality between Gromov--Witten invariants
\begin{equation}
\langle\psi^{k_1}\alpha_1,\ldots{},\psi^{k_n}\alpha_n\rangle_{g,n,\beta}^{\Pp(V_1)}=\langle\psi^{k_1}\FF(\alpha_1),\ldots{},\psi^{k_n}\FF(\alpha_n)\rangle_{g,n,\Psi(\beta)}^{\Pp(V_2)}
\end{equation}
for any cohomology classes $\alpha_1,\ldots{},\alpha_n\in H^*(X_1)$, any set of natural numbers $k_1,\ldots{}, k_n$, any curve class $\beta\in N_1(X_1)$ and any genus  $g\in \Z_{\geq 0}$.
\end{thmx}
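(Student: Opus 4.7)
My plan is to combine virtual localization (in the split case) with a reduction via deformation of extensions, and then to transfer the split-case result to general bundles through a flag-bundle construction.

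\emph{Step 1 (Split case via localization).} Suppose first that both $V_1$ and $V_2$ are direct sums of line bundles, say $V_i = L_{i,1}\oplus\cdots\oplus L_{i,r}$. Then $\Pp(V_i)$ carries a fiberwise $(\mathbb{C}^*)^{r-1}$-action with fixed locus the disjoint union of $r$ copies of $S$. Graber--Pandharipande virtual localization writes the GW invariants of $\Pp(V_i)$ as a sum over decorated graphs, whose vertex factors are twisted GW invariants of $S$ depending on the equivariant parameters and the Chern roots $\ell_{i,j} := c_1(L_{i,j})$, and whose edge factors are explicit rational expressions in these parameters. Since the invariant is torus-equivariant with trivial action on the insertions, the final sum is a symmetric polynomial in $\ell_{i,1},\ldots,\ell_{i,r}$, hence determined by the elementary symmetric functions, i.e.\ by $c(V_i)$. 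This yields equality when $c(V_1)=c(V_2)$ in the split case.

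\emph{Step 2 (Reduction via extensions).} A short exact sequence $0\to V'\to V\to V''\to 0$ on $S$ admits a Rees-type flat deformation to its split form: there is a flat family $\mathcal V$ over $S\times\mathbb A^1$ with $\mathcal V|_{t=0}=V'\oplus V''$ and $\mathcal V|_{t\ne 0}=V$, obtained by scaling the extension class. The relative projectivization $\Pp(\mathcal V)$ is smooth over $\mathbb A^1$, so by deformation invariance of GW invariants, $\Pp(V)$ and $\Pp(V'\oplus V'')$ have the same GW theory. Iterating along any filtration of $V$ with line-bundle successive quotients reduces us to Step 1.

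\emph{Step 3 (General $V$).} The obstacle is that a general bundle $V$ on $S$ need not admit a filtration with line-bundle quotients over $S$. I would remedy this by passing to the complete flag bundle $\pi: Y=\mathrm{Fl}(V)\to S$, on which $\pi^*V$ tautologically admits such a filtration and therefore, by Steps 1 and 2, the GW theory of $\Pp(\pi^*V)$ is determined by $\pi^*c(V)$. Since $\pi$ is smooth and proper with $H^*(Y)$ free of finite rank over $H^*(S)$, the projection formula together with a careful analysis of how curve classes on $\Pp(V)$ lift to $\Pp(\pi^*V)=Y\times_S\Pp(V)$ should express the invariants of $\Pp(V)$ in terms of those of $\Pp(\pi^*V)$. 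The main obstacle is precisely this last transfer: $\mathrm{Fl}(V_1)$ and $\mathrm{Fl}(V_2)$ are typically non-isomorphic varieties with only canonically isomorphic cohomology rings over $H^*(S)$, so one cannot directly identify GW data upstairs. The cleanest resolution is probably to bypass the flag bundle entirely and run the symmetrization of Step 1 at the level of \emph{formal} Chern roots of $V$, in the spirit of Grothendieck's splitting principle combined with virtual localization, thereby producing a universal formula in $c(V)$ and the GW data of $S$.
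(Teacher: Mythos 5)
Your Step 1 runs directly into the difficulty that the paper highlights (and deliberately avoids) in its ``Techniques and difficulties'' discussion. With a fiberwise $(\C^*)^{r-1}$-action scaling the summands $L_{i,1},\ldots,L_{i,r}$, the equivariant theory of $\Pp(V_i)$ genuinely depends on the individual Chern roots: for split rank-$2$ bundles $V_1=L_1\oplus L_2$ and $V_2=L_1'\oplus L_2'$ with $c(V_1)=c(V_2)$ but $c_1(L_1)\neq c_1(L_j')$, the equivariant total Chern classes differ, so the equivariant GW theories differ. The localization sum is symmetric only under the \emph{simultaneous} permutation of the pairs $(\lambda_j,\ell_{i,j})$, and the individual graph contributions have poles as $\lambda_j\to 0$. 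Thus the claim that ``the final sum is a symmetric polynomial in $\ell_{i,1},\ldots,\ell_{i,r}$, hence determined by the elementary symmetric functions'' does not follow: symmetry holds only after passing to the nonequivariant limit, which cannot be taken term by term, and even then the formula involves invariants of $S$ twisted by specific line-bundle combinations such as $L_{i,j}\otimes L_{i,k}^{-1}$ rather than manifestly symmetric data. (The paper also notes that the packaged $I$-function of Brown is not symmetric in the Chern roots.) This is precisely the reason the paper does not take your route. You have also correctly identified that Step 3 is stuck: the flag-bundle reduction is circular, since determining GW of $\mathrm{Fl}(V_i)$ from Chern classes is a special case of what is being proven, and ``formal Chern roots'' has no content at the level of a virtual integral.

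The paper's actual argument sidesteps the splitting question entirely. It takes a single diagonal $\C^*$-action (not a full torus) on the compactification $X=\Pp(V\oplus\sO)$, using $\bM_{g,n}(X,\beta)$ as a master space. After twisting $V$ by the inverse of a sufficiently ample line bundle, integrating an equivariant insertion of cohomological degree strictly below the (larger) virtual dimension of $\bM_{g,n}(X,\beta)$ forces the equivariant integral to vanish (Lemma \ref{lemma:vanish}). The virtual localization formula then expresses the genus-$g_0$, class-$\beta_0$ invariant of $\Pp(V)\cong X_\infty$ (the leading graph $\Gamma_0$) in terms of invariants of $\Pp(V)$ of strictly smaller curve class or genus, together with twisted invariants of $S\cong X_0$. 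A Grothendieck--Riemann--Roch argument (Lemma \ref{lemma:tw}) shows these twisted invariants of $S$ depend on $V$ only through $c(V)$. An induction on genus and curve class, with fiber-class invariants handled separately via a Grassmannian embedding, then completes the proof. This approach requires no splitting, no flag bundle, and no nonequivariant limit of a large torus.
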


Here $\FF, \Psi$ are natural isomorphisms identifying $H^*(\Pp(V_1))$ with $H^*(\Pp(V_2))$, and $N_1(\Pp(V_1))$ with $N_1(\Pp(V_2))$, respectively (see section \ref{section:main}).

\subsection{Background}
As a natural construction in geometry, the projectivization of vector bundles (projective bundles) has appeared as key objects in various mathematical theories. In particular, it's important in Gromov--Witten theory as well. For example, Gromov--Witten theory of projective bundles has appeared as a central subject in crepant transformation conjecture for ordinary flops (e.g., \cite{LLW1, LLW2, LLQW}), and the later proposals of the \emph{functoriality} of Gromov--Witten theory (\cite{LLW-string, LLW-ams}) also emphasize the role of projective bundles. Besides, projective bundle appears naturally in the degeneration to the normal cone which is an important construction in some Gromov--Witten results (e.g., \cite{MP,HLR}).

\subsection{Techniques and difficulties}
In \cite{MP, GB, CGT}, the virtual localization under the fiberwise torus action has been very effective in the study of Gromov--Witten theory of split projective bundles. In spite of this, it is difficult to establish Theorem \ref{thmA} in the case of split bundles using localization. The main hurdle is that localization would not yield symmetric expressions in Chern roots until nonequivariant limits are taken. A way to see this is probably the following. For simplicity suppose we are given two split rank-$2$ bundles $V_1=L_1\oplus L_2$ and $V_2=L_1'\oplus L_2'$ with $c(V_1)=c(V_2)$ but $c_1(L_1)\neq c_1(L_i')$ for both $i=1,2$. If we impose torus actions on $V_1$ and $V_2$ by scaling each factor, one can compute that $c_{T}(V_1)\neq c_{T}(V_2)$. Therefore, we cannot expect $\Pp(V_1)$ and $\Pp(V_2)$ to have the same equivariant theory, and Theorem \ref{thmA} only holds when passing to their nonequivariant limits. One might notice that the I-function in \cite{GB} packages the localization computation into a form that admits a nonequivariant limit. But unfortunately the I-function in \cite{GB} is not symmetric in Chern roots either.

When the projective bundle is not split, such a fiberwise torus action is absent. A degeneration argument is applied in \cite{LLQW} and has had some success in a limited form of \emph{quantum splitting principle}.

In this paper, Theorem \ref{thmA} is established using a new method which works regardless of the splitting of a projective bundle. The idea is the following. We compactify a vector bundle $V$ into $\Pp(V\oplus \sO)$. The scaling action on $V$ by $\C^*$ can be extended into $\Pp(V\oplus \sO)$ and we use $\bM_{g,n}(\Pp(V\oplus \sO),\beta)$ as a master space. Using virtual localization, we obtain relations between (twisted) Gromov--Witten invariants of $S$ and $\Pp(V)$ by integrating a cohomology class of degree strictly less than the virtual dimension. By using this kind of relations, we determine Gromov--Witten theory of $\Pp(V)$ out of the one of $S$, and the whole algorithm only involves Chern classes of $V$.

\subsection{Consequences and remarks}
The result is surprising to us because Chern classes are merely topological invariants which are not enough to classify topological types of vector bundles. For example,
\begin{fact*}[\cite{AR}]
In $\Pp^3$, suppose $V$ is a rank $2$ holomorphic vector bundle with even first Chern class. There exists a holomorphic vector bundle $V'$ such that $c_1(V)=c_1(V'), c_2(V)=c_2(V')$ while $V\not\cong V'$ as complex vector bundles.
\end{fact*}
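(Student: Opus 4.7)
The plan is to combine an obstruction-theoretic classification of rank-$2$ complex topological vector bundles on $\Pp^3$ with an explicit construction of holomorphic representatives. Concretely, I would isolate a secondary $\Z/2$-valued topological invariant $\alpha$ beyond $(c_1,c_2)$, show it is independent of $(c_1,c_2)$ exactly when $c_1$ is even, and then realize both values of $\alpha$ by algebraic rank-$2$ bundles.

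For the topological classification, a rank-$2$ bundle on $\Pp^3$ is a homotopy class of maps $\Pp^3\to BU(2)$. I would run obstruction theory along the cellular filtration $\Pp^0\subset\Pp^1\subset\Pp^2\subset\Pp^3$ (cells in degrees $0,2,4,6$). Using $\pi_k(BU(2))=\pi_{k-1}(U(2))$, the difference classes for extensions across the $2k$-cell live in $H^{2k}(\Pp^3;\pi_{2k-1}(U(2)))$. Up to the $4$-skeleton $\Pp^2$ the invariants $c_1\in H^2$ (from $\pi_1(U(2))=\Z$) and $c_2\in H^4$ (from $\pi_3(U(2))=\Z$) give the full classification; the vanishings $\pi_2(U(2))=0$ and the triviality of $H^{\mathrm{odd}}(\Pp^3;\Z)$ kill the intervening obstructions. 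On the top $6$-cell, using $SU(2)\simeq S^3$ and the fibration $SU(2)\to U(2)\to S^1$, one computes $\pi_5(BU(2))=\pi_4(U(2))=\pi_4(S^3)=\Z/2$. Hence extensions across the $6$-cell are parameterized by a $\Z/2$ class, the candidate secondary invariant $\alpha$.

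Next I would analyze how $\alpha$ depends on $(c_1,c_2)$. This is controlled by the next $k$-invariant in the Postnikov tower of $BU(2)$, computable via Steenrod squares applied to the mod-$2$ reductions of $c_1,c_2$. A careful tracking (following Atiyah--Rees) shows that modulo $2$, $\alpha$ is congruent to a polynomial in these reductions that forces $\alpha$ when $c_1$ is odd, but leaves $\alpha\in\Z/2$ free when $c_1$ is even. Consequently, for each $(c_1,c_2)$ with $c_1$ even there exist two non-isomorphic topological rank-$2$ bundles with those prescribed Chern classes.

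Finally, I would realize both topological types by holomorphic bundles via Serre's construction: for a local complete intersection curve $C\subset\Pp^3$ with appropriate numerical invariants and an extension class in $\mathrm{Ext}^1(\mathcal{I}_C(c_1),\sO)$, the resulting bundle $V$ sits in an exact sequence $0\to\sO\to V\to\mathcal{I}_C(c_1)\to 0$ with prescribed $c_1,c_2$. The invariant $\alpha(V)$ can be read off from the topology of $C$, and varying $C$ among admissible choices exhibits both values. The main difficulty, and the technical core of the Atiyah--Rees argument, is the middle step: pinning down the precise cohomological constraint between $\alpha$ and $(c_1,c_2)$ so that ``$c_1$ even'' appears as exactly the locus where $\alpha$ is free; the realization step (c) then rides on the well-developed existence theory of rank-$2$ algebraic bundles on $\Pp^3$.
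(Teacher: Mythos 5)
The paper does not prove this statement; it is quoted as a known result and attributed to Atiyah--Rees \cite{AR}, so there is no in-paper proof to compare against. Your proposal is a sketch of roughly the same strategy Atiyah--Rees follow (topological classification producing a $\Z/2$-valued secondary invariant $\alpha$, then holomorphic realization of both values), but it has two substantive issues.

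First, a calculational slip: the set of homotopy classes of maps $\Pp^3\to BU(2)$ restricting to a fixed class on $\Pp^2$ is an orbit of $\pi_6(BU(2))=\pi_5(U(2))\cong\pi_5(S^3)\cong\Z/2$, not of $\pi_5(BU(2))=\pi_4(U(2))\cong\pi_4(S^3)$ (the latter governs the primary obstruction to the existence of an extension, not the parameter space of extensions). Both groups happen to be $\Z/2$, so your conclusion survives, but the identification as written conflates existence obstruction with difference class. Second, and more seriously, the step you flag as ``the technical core'' --- showing that $\alpha$ is a genuine isomorphism invariant precisely when $c_1$ is even, i.e.\ that the $\Z/2$-action on bundles with fixed $(c_1,c_2)$ is free exactly in that case --- is left at the level of ``controlled by the next $k$-invariant \ldots\ Steenrod squares,'' which is not an argument. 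This is where essentially all the content of the Atiyah--Rees theorem sits; in their paper the cleanest account proceeds by twisting to reduce to $c_1=0$, reducing structure group to $SU(2)=Sp(1)$, and then defining and analyzing $\alpha$ via KR-theory and the real index (rather than by chasing $k$-invariants of $BU(2)$). Likewise, the holomorphic realization step needs to actually compute $\alpha$ for explicit constructions (Serre/Horrocks) and exhibit both values; quoting the existence theory for rank-$2$ bundles on $\Pp^3$ is not enough on its own. So the outline is in the right shape, but the middle and final steps are gaps rather than proofs.
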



In the meantime, Theorem \ref{main} has an interesting consequence. In section \ref{section:appl}, we explain that our result immediately applies to \cite{HLR} to get the following refinement of \cite[Lemma 1]{MP}.
\begin{thmx}[=Theorem \ref{blowup}]
Let $Z\subset Y$ be inclusions of smooth projective varieties and $N_{Z/Y}$ be the normal bundle of $Z$. Let $\tilde Y$ be the blow-up of $Y$ at $Z$ and $E$ be the exceptional divisor. The absolute Gromov--Witten invariants of $\tilde Y$ can be determined by the absolute Gromov--Witten invariants of $Y$ and $Z$, plus the following topological data:
\begin{enumerate}
\item The cohomology rings $H^*(Y)$, $H^*(Z)$ and their pull-back maps under inclusion.
\item The Chern classes $c_i(N_{Z/Y})\in H^*(Y)$.
\end{enumerate}
\end{thmx}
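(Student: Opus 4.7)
The plan is to combine Theorem~\ref{thmA} with the blow-up formula of Hu--Li--Ruan~\cite{HLR}. Recall that the degeneration to the normal cone of $Z\subset Y$ has generic fiber $Y$ and special fiber $Y\cup_Z P$, where $P:=\Pp(N_{Z/Y}\oplus\sO)$ is the projective completion of the normal bundle, glued to $Y$ along the zero section $Z\subset P$. A parallel construction on the blow-up side identifies $\tilde Y$ with a generic fiber whose degeneration contains $P$ glued along the exceptional divisor $E=\Pp(N_{Z/Y})\subset P$. The degeneration formula of \cite{HLR} then expresses the absolute GW invariants of $\tilde Y$ as a convolution of relative GW invariants of the pair $(Y,Z)$ and of the pair $(P,E)$, with gluing controlled by the diagonal class on $E$.

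I would then carry out three steps. First, check that the purely combinatorial/topological ingredients of the convolution depend only on the stated data: admissible curve classes and their splittings at $Z$ are encoded in $H^*(Y)$, $H^*(Z)$ and the inclusion pull-back; the diagonal on $E=\Pp(N_{Z/Y})$ is controlled, via the projective bundle formula, by $H^*(Z)$ together with the Chern classes $c_i(N_{Z/Y})$. Second, invoke the standard reduction of relative invariants to absolute invariants plus rubber integrals (the latter further reduced via rigidification): the relative invariants of $(Y,Z)$ and of $(P,E)$ are then expressible in terms of absolute GW invariants of $Y$, $Z$ and $P$. Third, and this is where the new content enters, apply Theorem~\ref{thmA} to $P\to Z$: since $c(N_{Z/Y}\oplus\sO)=c(N_{Z/Y})$, the absolute GW theory of $P$ is determined by that of $Z$ and the Chern classes $c_i(N_{Z/Y})$, and the identifications $\FF,\Psi$ are themselves built from this topological data via the projective bundle formula.

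The main obstacle is bookkeeping rather than conceptual novelty. One must verify that every relative, rubber and descendent insertion appearing in the HLR formula is ultimately rewritten using only absolute invariants of $Y$ and $Z$, cohomology rings with the inclusion pull-back, and $c_i(N_{Z/Y})$; in particular the pull-back $H^*(Y)\to H^*(Z)$ is needed to translate insertions on $Y$ into classes on $Z$ that can then be pushed into $P$. Once this accounting is in place the theorem follows immediately; in this sense Theorem~\ref{thmA} is the missing ingredient that upgrades \cite[Lemma~1]{MP} from a statement involving $\Pp(N_{Z/Y}\oplus\sO)$ as an independent input to one requiring only $Z$ and the Chern classes of $N_{Z/Y}$.
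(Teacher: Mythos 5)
The core idea is right and matches the paper's strategy: Theorem~\ref{thmA} applied to $P:=\Pp(N_{Z/Y}\oplus\sO)\to Z$ (using $c(N_{Z/Y}\oplus\sO)=c(N_{Z/Y})$) is exactly what removes the absolute GW invariants of $P$ from the list of required inputs in the version of this statement that follows from~\cite{HLR}. But the degeneration geometry in your write-up is wrong, and this hides a real gap. Deformation to the normal cone of $Z\subset Y$ has special fiber $\tilde Y\cup_E P$, not $Y\cup_Z P$; since $Z$ generally has codimension $\geq 2$, there is no log smooth pair $(Y,Z)$ and ``relative GW of $(Y,Z)$'' is not defined. More importantly, that degeneration goes the wrong way for your purposes: it expresses absolute GW of $Y$ in terms of relative GW of $(\tilde Y,E)$ and $(P,E)$, so ``standard rubber reduction'' of the relative theory of $(\tilde Y,E)$ to absolute invariants would require absolute GW of $\tilde Y$ --- which is circular. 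The actual input from~\cite{HLR} is Theorem~5.15, a nontrivial \emph{inversion} of this degeneration formula, and it only produces relative invariants of $(\tilde Y,E)$ whose insertions are pull-backs $p^*\sigma$ along the contraction $p:\tilde Y\to Y$.

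Consequently two further steps that appear in the paper are missing from your proposal. One needs a \emph{second} degeneration --- to the normal cone of the divisor $E\subset\tilde Y$, whose other component is the $\Pp^1$-bundle $\Pp_E(\sO(-1)\oplus\sO)$ over $E$, not $P$ --- to pass from relative invariants of $(\tilde Y,E)$ to absolute invariants of $\tilde Y$. And one must then confront the restriction to pull-back insertions: the degeneration formula requires lifting an arbitrary class $\bar\alpha\in H^*(\tilde Y)$ to the total space $\X$, and the paper's argument exploits the freedom to modify that lifting by classes supported on $E$ so that the $\tilde Y$-side insertion becomes $p^*p_!\bar\alpha$, landing in the subring covered by HLR. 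Without this lemma the argument does not close. So your proposal captures the conceptual role of Theorem~\ref{thmA} but does not supply a correct reduction from Theorem~\ref{thmA} and~\cite{HLR} to the stated result.
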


All the above results motivate us to ask some further questions. For example,
\begin{ques}
Can we establish a similar theorem for arbitrary $\Pp^n$-fibrations (not necessarily the projectivization of a vector bundle)?
\end{ques}

More precisely, a $\Pp^n$-fibration (smooth fibration with each fiber $\Pp^n$) will still satisfy the condition of Leray--Hirsch theorem, and we can impose a condition by requiring that the identification of the first Chern class of relative canonical sheaves induces an isomorphism between cohomology rings of given fibrations. One can see that this condition generalizes the one in Theorem \ref{thmA}. On the other hand, a $\Pp^n$-fibration admits other invariants, for example, the Brauer class. It might be interesting to study whether Brauer class or other invariants play a role here. 

Regarding the blow-up theorem, it would be interesting to see whether the whole algorithm can be made explicit. A further question we have in mind is the following.


\begin{ques}\label{q2}
Is it possible to state a relatively ``nice" relationship (say, using symplectic operators depending on $c_i(N_{Z/X})$) between Lagrangian cones $\mathcal L_{Bl_ZX}$, $\mathcal L_X$ and $\mathcal L_Z$? 
\end{ques}

\subsection{Acknowledgement}
The author would like to thank his advisor Y. P. Lee for his guidance and encouragement throughout the project. He would also like to thank Andrei Musta\c{t}\u{a}, Junliang Shen, Yang Zhou and  for helpful discussions. The author is supported by NSF and SwissMAP.

\section{Statement of the main theorem}\label{section:main}

In this paper, $c(V)$ refers to the total Chern class, i.e.
\[
c(V)=1+c_1(V)+\cdots{}+c_r(V).
\]

Let $S$ be a smooth projective variety. $V_1, V_2$ are vector bundles of rank $r$ over $S$. We further require that
\[
c(V_1)=c(V_2).
\]
Let $X_1=\Pp(V_1), X_2=\Pp(V_2)$ be the corresponding projective bundles, and
\[
\pi_i: X_i\rightarrow S
\]
are the projections for $i=1,2$. From Leray--Hirsch we know
\[
H^*(X_i)=H^*(S)[h_i]/(h_i^r+c_1(V_i)h_i^{r-1}+\cdots{}+c_r(V_i)), (i=1,2),
\]
where $h_i=\sO_{X_i}(1)$. Because their Chern classes agree, we have the following isomorphism
\[
\FF_{V_1,V_2}:H^*(X_1)\cong H^*(X_2)
\]
satisfying $\FF_{V_1,V_2}(\pi_1^*\sigma)=\pi_2^*\sigma, ~\FF(h_1)=h_2$. There is also an isomorphism
\[
\Psi_{V_1,V_2}:N_1(X_1)\cong N_1(X_2)
\]
determined by the intersection property
\[
(D,\beta)=(\FF_{V_1,V_2}(D),\Psi_{V_1,V_2}(\beta)),
\]
for any $D\in H^2(X_1)$. To shorten the notation, we adopt the following convention,
\begin{conv}
When the context is clear, we write $\FF$ instead of $\FF_{V_1,V_2}$, and $\Psi$ instead of $\Psi_{V_1,V_2}$.
\end{conv}



Now we state our main theorem.
\begin{thm}\label{main}
Let $S$ be a smooth projective variety, and $V_1,V_2$ be two vector bundles on $S$. Let $X_1, X_2, \FF$ also be the same as above. Suppose $c(V_1)=c(V_2)$. Then we have the following equality between Gromov--Witten invariants.
\begin{equation}\label{eqn:maineqn}
\langle\psi^{k_1}\alpha_1,\ldots{},\psi^{k_n}\alpha_n\rangle_{g,n,\beta}^{X_1}=\langle\psi^{k_1}\FF(\alpha_1),\ldots{},\psi^{k_n}\FF(\alpha_n)\rangle_{g,n,\Psi(\beta)}^{X_2}
\end{equation}

for any cohomology classes $\alpha_1,\ldots{},\alpha_n\in H^*(X_1)$, any set of natural numbers $k_1,\ldots{}, k_n$, any curve class $\beta\in N_1(X_1)$ and any genus  $g\in \Z_{\geq 0}$.
\end{thm}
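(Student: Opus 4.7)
The plan is to reduce the Gromov--Witten theory of $X := \Pp(V)$ to that of $S$ through a master-space localization argument in which every bridge term depends on $V$ only through $c(V)$. Introduce the projective compactification $Y := \Pp(V\oplus\sO)$, equipped with the $\C^*$-action that scales the $V$-summand and fixes the $\sO$-summand. The $\C^*$-fixed locus of $Y$ consists of the infinity divisor $X_\infty = \Pp(V) \hookrightarrow Y$, naturally isomorphic to $X$, and the zero section $X_0 = \Pp(\sO) \cong S$; the equivariant normal bundle of $X_0$ is $V$ twisted by the equivariant weight, and that of $X_\infty$ is $\sO_X(1)$ twisted by the opposite weight. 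These normal data are expressible purely in terms of $c(V)$.

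First I would lift the action to $\bM_{g,n}(Y,\tilde\beta)$ for curve classes $\tilde\beta$ on $Y$ projecting to $\beta$ on the base, and apply the Graber--Pandharipande virtual localization formula. The $\C^*$-fixed loci decompose into graph types in which every vertex carries a stable map into $X_0$ or into $X_\infty$, and every edge corresponds to a multiple cover of a $\Pp^1$-fiber of $Y\to S$ joining the two sections. Vertex contributions at $X_\infty$ produce descendant Gromov--Witten invariants of $X$; vertex contributions at $X_0$ produce Gromov--Witten invariants of $S$ twisted by classes built from $V$; edge contributions are Hodge-type integrals depending only on $c(V)$. The twisted vertex contributions at $X_0$ reduce further to the untwisted Gromov--Witten theory of $S$ together with $c(V)$ via Quantum Riemann--Roch (or the Givental--Tseng twisting formula).

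The central technical move is to integrate over $\bM_{g,n}(Y,\tilde\beta)$ a cohomology class of total degree strictly less than the virtual dimension. The non-equivariant integral vanishes for trivial degree reasons, so the sum of the localization contributions, viewed as a Laurent polynomial in the equivariant parameter, has vanishing $t^0$-coefficient. This yields a linear relation among the three kinds of vertex contributions above. By choosing $\tilde\beta$, the degree-deficient class, and the insertions appropriately, one arranges that a designated descendant invariant of $X$ appears with a nonzero coefficient while every other invariant of $X$ appearing on the right-hand side has strictly smaller complexity. Iterating this triangular recursion eventually reduces every descendant invariant of $X$ to data on $S$ together with $c(V)$.

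The main obstacle will be the combinatorial bookkeeping of the induction: choosing a well-order on descendant invariants of $X$ (presumably lexicographic in the fiber degree of $\beta$, the degree of $\beta$ on the base, the genus, the number of insertions that use high powers of the hyperplane class $h$, and the total descendant order) and then producing, for each invariant in this order, a specific lift $\tilde\beta$ and degree-deficient insertion class for which the resulting localization identity is genuinely triangular with strictly smaller entries. Once such a recursion is established, running it in parallel for $V_1$ and $V_2$ and using $c(V_1)=c(V_2)$ to identify the seed data on $S$ and every bridge term built from Chern classes yields the equality of Theorem~\ref{main}.
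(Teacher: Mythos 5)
Your proposal follows the same strategy as the paper: compactify $V$ to the master space $\Pp(V\oplus\sO)$ with the fiberwise scaling action, apply Graber--Pandharipande virtual localization, force the total equivariant integral to vanish by inserting a class whose degree falls short of the virtual dimension, and thereby obtain a triangular relation expressing a chosen descendant invariant of $\Pp(V)$ in terms of lower-order invariants of $\Pp(V)$ and twisted invariants of $S$, which in turn depend only on $c(V)$ by (Quantum) Riemann--Roch. Running this in parallel for $V_1$ and $V_2$ and matching the ingredients is exactly what the paper does. So the outline is correct and essentially identical to the paper's.

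There are, however, two genuine gaps in the proposal. First, the existence of a ``degree-deficient'' insertion is not automatic. The virtual dimension of $\bM_{g,n}(\Pp(V\oplus\sO),\beta)$ exceeds that of $\bM_{g,n}(\Pp(V),\beta)$ by exactly $1-g+(\beta,\sO_{\Pp(V)}(1))$, and the natural lift $\tilde\alpha$ of an insertion $\alpha$ has the same cohomological degree as $\alpha$. So the vanishing argument requires $(\beta,\sO_{\Pp(V)}(1))>g-1$, which fails in general. The paper resolves this by replacing $V$ with $V\otimes L^{-1}$ for $L$ sufficiently ample (leaving $\Pp(V)$ unchanged but inflating the $\sO(1)$-degree of any non-fiber curve class); you never justify that the required degree gap can be achieved, and ``choosing the insertions appropriately'' does not help, since the deficit is determined by $\beta$ and $g$ alone. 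Second, the base case of the recursion is missing. When $\beta$ is a multiple of the fiber class $f$, one has $\pi_*\beta=0$, the ample twist contributes nothing to $(\beta,\sO(1))$, and (for $g\geq 1$) the vanishing lemma need not apply. The paper handles fiber-class integrals by a separate argument, realizing $\bM_{g,n}(\Pp(V),kf)$ as a base change of the corresponding moduli space over a Grassmannian via $V\hookrightarrow\sO^N$, so that the answer depends only on the map on cohomology induced by the classifying morphism, hence only on $c(V)$. Your lexicographic order mentions fiber degree but does not say how the fiber-class invariants themselves are determined. (A third, smaller omission: you silently assume the Mori cones of $\Pp(V_1\oplus\sO)$ and $\Pp(V_2\oplus\sO)$ are identified under the natural cohomology isomorphism; this has to be checked, as the paper does in Section~\ref{section:proof}, so that the recursions for $V_1$ and $V_2$ index over the same curve classes.) Also, one small imprecision: the equivariant integral of a class of degree strictly below virtual dimension vanishes identically as an element of $\C[\lambda]$, not merely in its $\lambda^0$-coefficient; the stronger vanishing is what the localization identity actually uses.
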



Let's make a preliminary observation towards the theorem.
\begin{lem}\label{lemma:curve}
$(K_{X_1},\beta)=(K_{X_2},\Psi(\beta))$.
\end{lem}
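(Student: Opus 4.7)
The plan is to compute $K_{X_i}$ explicitly as a class in $H^2(X_i)$ expressed in terms of the generators appearing in the Leray--Hirsch presentation, show that $\FF$ sends $K_{X_1}$ to $K_{X_2}$, and then invoke the defining compatibility of $\Psi$ with $\FF$ on degree-two classes.

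First I would apply the relative Euler sequence for the projective bundle $\pi_i: X_i = \Pp(V_i) \to S$, namely
\[
0 \to \sO_{X_i} \to \pi_i^*V_i \otimes \sO_{X_i}(1) \to T_{X_i/S} \to 0,
\]
which gives $c_1(T_{X_i/S}) = \pi_i^* c_1(V_i) + r\, h_i$, and therefore
\[
K_{X_i} \;=\; \pi_i^* K_S + K_{X_i/S} \;=\; \pi_i^* K_S - \pi_i^* c_1(V_i) - r\, h_i.
\]

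Next, since $c(V_1) = c(V_2)$ implies in particular $c_1(V_1) = c_1(V_2)$, and since $\FF$ is defined by $\FF(\pi_1^*\sigma) = \pi_2^*\sigma$ and $\FF(h_1) = h_2$, applying $\FF$ to the formula above immediately yields $\FF(K_{X_1}) = K_{X_2}$. Finally, because $K_{X_1} \in H^2(X_1)$, the defining intersection compatibility of $\Psi$, namely $(D,\beta) = (\FF(D),\Psi(\beta))$ for $D \in H^2(X_1)$, applied to $D = K_{X_1}$, gives
\[
(K_{X_1},\beta) \;=\; (\FF(K_{X_1}),\Psi(\beta)) \;=\; (K_{X_2},\Psi(\beta)),
\]
which is the desired equality.

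There is essentially no obstacle here: the only thing to be careful of is the sign and convention of the Euler sequence (equivalently, whether one uses Grothendieck's convention of quotients or sub-line-bundles for $\Pp(V_i)$), but either convention produces a formula for $K_{X_i}$ that depends only on $K_S$, $c_1(V_i)$ and $h_i$, all of which are matched by $\FF$ under the hypothesis $c(V_1) = c(V_2)$. The lemma is thus a direct consequence of the Leray--Hirsch presentation together with the way $\Psi$ was defined.
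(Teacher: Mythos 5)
Your proof is correct and follows essentially the same route as the paper: both use the relative Euler sequence to express $K_{X_i}$ in terms of $K_S$, $c_1(V_i)$, and $h_i$, then observe that $\FF$ matches these data by hypothesis and conclude via the defining compatibility $(D,\beta)=(\FF(D),\Psi(\beta))$. The only difference is cosmetic — you write out the first Chern class computation explicitly, whereas the paper appeals to a ``splitting principle calculation.''
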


As a result, the virtual dimensions of the involved moduli spaces of stable maps are the same, which is one of the first things one can check directly for the theorem.

\begin{proof}
Recall the Euler sequence
\[
0\rightarrow \sO_{X_i}\rightarrow \pi_i^*V_i \otimes \sO_{\Pp(V_i)}(1) \rightarrow T_{\pi_i}\rightarrow 0,
\]
where the $T_{\pi_i}$ is the relative tangent sheaf of the morphism $\pi_i$. We also know that for $i=1, 2$
\[
K_{X_i}=K_{\pi_i} \otimes \pi_i^*K_{S}=det(T_{\pi_i}^\vee) \otimes \pi_i^*K_{S}.
\]
Thus, it suffices to prove
\[
\FF(c_1(det(T_{\pi_1}^\vee)))=c_1(det(T_{\pi_2}^\vee)).
\] 
We see from the Euler sequence that 
\[
c(T_{\pi_i})=c(\pi_i^*V_i \otimes \sO_{\Pp(V_i)}(1))
\] 
for $i=1, 2$. The proof can be easily finished with a splitting principle calculation using the fact that $\FF\circ \pi_1^*=\pi_2^*$, $\FF(c_1(\sO_{\Pp(V_1)}(1)))=c_1(\sO_{\Pp(V_2)}(1))$ and $c(V_1)=c(V_2)$.
\end{proof}

Note that in proving the theorem, we are free to twist both $V_1$ and $V_2$ by a fixed line bundle \emph{simultaneously}. To be more precise, for any line bundle $L$ on $S$, we still have
\[
c(V_1\otimes L^{-1})=c(V_2 \otimes L^{-1})
\]
and
\[
\Pp(V_i\otimes L^{-1})\cong \Pp(V_i).
\]
Throughout this paper, we choose $L$ to be sufficiently ample so that $\sO_{\Pp(V_i\otimes L^{-1})}(1)=\sO_{\Pp(V_i)}(1)+\pi_i^*L$ is ample on $X_i$. Therefore,without loss of generality, we make the following assumption in this paper.
\begin{assu}\label{assumption}
We assume that $\sO_{\Pp(V_i)}(1)$ is ample on $\Pp(V_i)$ for $i=1,2$.
\end{assu}

Later, more specific requirements about the ampleness of $L$ will be made.

\section{Twisted Gromov--Witten invariants}
The proof of Theorem \ref{main} roughly goes by relating corresponding invariants on $X_1$ and $X_2$ to the same expression of twisted invariants on $S$. In this section, we set up the notations and recall basic facts about twisted Gromov--Witten invariants.

Let $X$ be a smooth projective variety, $E$ be a vector bundle over $X$. Let 
\[
ft_{n+1}:\bM_{g,n+1}(X,\beta) \rightarrow \bM_{g,n}(X,\beta)
\]
be the map forgetting the last marked point. Under this map $\bM_{g,n+1}(X,\beta)$ can be viewed as the universal family over $\bM_{g,n}(X,\beta)$. Furthermore, the evaluation map of the last marked point
\[
ev_{n+1}:\bM_{g,n+1}(X,\beta) \rightarrow X
\]
serves as the universal stable map from the universal family over $\bM_{g,n}(X,\beta)$.

Let $\C^*$ act on $X$ trivially and on $E$ by scaling (weight $1$ on every $1$-dimensional linear subspace of a fiber). Let $\lambda$ be the corresponding equivariant parameter. There exists a two-term complex of vector bundles in $\bM_{g,n}(X,\beta)$
\[
0\rightarrow E_{g,n,\beta}^0 \rightarrow E_{g,n,\beta}^1\rightarrow 0
\]
such that the $i$-th cohomology is $R^i(ft_{n+1})_*ev_{n+1}^*E$ for $(i=0,1)$.

Let $\C^*$ act on $E_{g,n,\beta}^i$ by scaling as well. Write $E_{g,n,\beta}$ for the two-term complex $[E_{g,n,\beta}^0 \rightarrow E_{g,n,\beta}^1]$ in $D^b(\bM_{g,n}(X,\beta))$. Define the equivariant Euler class 
\[
e_{\C^*}(E_{g,n,\beta})=\displaystyle\frac{e_{\C^*}(E_{g,n,\beta}^0)}{e_{\C^*}(E_{0,n,\beta}^1)} \in H^*(\bM_{g,n}(X,\beta))\otimes_\C \C[\lambda,\lambda^{-1}].
\]
As a result,
\[
e_{\C^*}(E_{g,n,\beta}^1)=\lambda^{r_1}+c_1(E_{g,n,\beta}^1)\lambda^{{r_1}-1}+\cdots{}+c_r(E_{g,n,\beta}^1)\in H^*(\bM_{g,n}(X,\beta))\otimes_{\C}\C[\lambda],
\]
where $r_1=\text{rank}(E_{g,n,\beta}^1)$. Since elements in $H^*(\bM_{0,n}(X,\beta))$ are nilpotent, one easily sees that $e_{\C^*}(E_{g,n,\beta})$ is well-defined if we invert $\lambda$. Applying the same reason on $E^0_{g,n,\beta}$, we see $e_{\C^*}(E_{g,n,\beta})$ is invertible if $\lambda$ can be inverted.

Define the twisted Gromov--Witten invariants to be
\[
\langle \psi^{k_1}\alpha_1,\ldots{},\psi^{k_n}\alpha_n \rangle_{g,n,\beta}^{X,tw,E}=\displaystyle\int_{[\bM_{0,n}(X,\beta)]^{vir}} \frac{1}{e_{\C^*}(E_{g,n,\beta})} \cup \prod\limits_{i=1}^n \psi_i^{k_i}ev_{i}^*\alpha_i \in \C[\lambda,\lambda^{-1}] .
\]
\begin{rmk}
Under the more general framework in \cite{cg}, our twisted invariants are only their special case by choosing the multiplicative characteristic class to be $\displaystyle\frac{1}{e_{\C^*}(\cdot)}$. Throughout our paper, this is the only case we need.
\end{rmk}

\begin{lem}\label{lemma:untw}
If the insertions are homogeneous and satisfy 
\[
\sum\limits_{i=1}^n k_i+\sum\limits_{i=1}^n deg(\alpha_i)=dim([\bM_{g,n}(X,\beta)]^{vir}) ,
\]
then
\[
\langle\psi^{k_1}\alpha_1,\ldots{},\psi^{k_n}\alpha_n\rangle_{g,n,\beta}^{X,tw,E}=\displaystyle\frac{\langle\psi^{k_1}\alpha_1,\ldots{},\psi^{k_n}\alpha_n\rangle_{g,n,\beta}^{X}}{\lambda^r}
\]
for some $r$.
\end{lem}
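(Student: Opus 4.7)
The plan is to expand $1/e_{\C^*}(E_{g,n,\beta})$ as a Laurent polynomial in $\lambda$ with coefficients in $H^*(\bM_{g,n}(X,\beta))$, and then observe that the dimension hypothesis forces all but the leading $\lambda$-term to drop out of the integral against the insertions.

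First I would write
\[
\frac{1}{e_{\C^*}(E_{g,n,\beta})}=\frac{e_{\C^*}(E^1_{g,n,\beta})}{e_{\C^*}(E^0_{g,n,\beta})}
\]
and apply the explicit formula
\[
e_{\C^*}(E^j_{g,n,\beta})=\lambda^{r_j}+c_1(E^j_{g,n,\beta})\lambda^{r_j-1}+\cdots+c_{r_j}(E^j_{g,n,\beta}),\qquad r_j=\text{rank}(E^j_{g,n,\beta}),
\]
to each factor. Since the Chern classes $c_i(E^0_{g,n,\beta})$ are nilpotent in $H^*(\bM_{g,n}(X,\beta))$, the denominator inverts as a geometric expansion in $\lambda^{-1}$ that truncates after finitely many steps; multiplying by the numerator then yields
\[
\frac{1}{e_{\C^*}(E_{g,n,\beta})}=\lambda^{-r}\Bigl(1+\sum_{k\geq 1}\delta_k\,\lambda^{-k}\Bigr),
\]
where $r:=r_0-r_1$ and each $\delta_k\in H^{2k}(\bM_{g,n}(X,\beta))$ is a universal polynomial in the Chern classes of $E^0_{g,n,\beta}$ and $E^1_{g,n,\beta}$.

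Next, I would substitute this expansion into the definition of the twisted invariant. By the dimension hypothesis, the class $\prod_{i=1}^n\psi_i^{k_i}ev_i^*\alpha_i$ already sits in top virtual degree, so each term involving $\delta_k$ with $k\geq 1$ raises the total cohomological degree strictly above $\dim[\bM_{g,n}(X,\beta)]^{vir}$ and therefore integrates to zero. Only the constant coefficient $1$ contributes, producing
\[
\langle\psi^{k_1}\alpha_1,\ldots,\psi^{k_n}\alpha_n\rangle^{X,tw,E}_{g,n,\beta}=\frac{1}{\lambda^r}\int_{[\bM_{g,n}(X,\beta)]^{vir}}\prod_{i=1}^n\psi_i^{k_i}ev_i^*\alpha_i=\frac{\langle\psi^{k_1}\alpha_1,\ldots,\psi^{k_n}\alpha_n\rangle^X_{g,n,\beta}}{\lambda^r}.
\]

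The argument is essentially bookkeeping of $\lambda$-weights against cohomological degrees, so I do not anticipate any real obstacle. The only subtlety worth flagging is that $r=r_0-r_1$ could be zero, positive, or negative a priori; but the statement only asserts the existence of some integer $r\in\Z$ with $1/\lambda^r\in \C[\lambda,\lambda^{-1}]$, so this poses no problem.
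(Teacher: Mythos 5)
Your argument is correct and is essentially the paper's own proof: expand $1/e_{\C^*}(E_{g,n,\beta})$ as $\lambda^{-r}(1+\cdots)$ with $r=\mathrm{rank}(E^0_{g,n,\beta})-\mathrm{rank}(E^1_{g,n,\beta})$, then observe that every term beyond the leading $1$ carries strictly positive cohomological degree and hence vanishes against an integrand already at top virtual degree. You merely spell out the geometric-series inversion and the nilpotency justification a bit more explicitly than the paper does.
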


\begin{proof}
One can expand to see that 
\[
\displaystyle\frac{1}{e_{\C^*}(E_{g,n,\beta})}=\frac{1}{\lambda^r}\left(1+\cdots{} \right),
\]
where $r=\text{rank}(E_{g,n,\beta}^0)-\text{rank}(E_{g,n,\beta}^1)$. Each summand in $\cdots{}$ at the end involves cohomology classes of nonzero degrees. Since the insertion already agrees with the virtual dimension of the moduli space, only the leading term may produce a nonzero number.
\end{proof}


\section{Virtual localization}\label{section:vloc}

In order to carry out the recursive algorithm in Section \ref{section:recursion}, we need to use virtual localization developed in \cite{GP}. We would like to briefly recall their technique and make the set-up in the next subsection.

\subsection{Localization}\label{section:localization}
Let $X$ be a smooth projective variety admitting an action by a torus $T=\C^m$. It induces an action of $T$ on $\bM_{g,n}(X,\beta)$. Let $\bM_\alpha$ be the connected components of the fixed loci $\bM_{g,n}(X,\beta)^T$ labeled by $\alpha$ with the inclusion $i_\alpha: \bM_\alpha\rightarrow \bM_{g,n}(X,\beta)$. The virtual fundamental class $[\bM_{g,n}(X,\beta)]^{vir}$ can be written as
\[
[\bM_{g,n}(X,\beta)]^{vir}=\sum_\alpha (i_\alpha)_! \displaystyle\frac{[\bM_\alpha]^{vir}}{e_T(N^{vir}_\alpha)},
\]
where $[\bM_\alpha]^{vir}$ is constructed from the fixed part of the restriction of the perfect obstruction theory of $\bM_{g,n}(X,\beta)$, and the virtual normal bundle $N^{vir}_\alpha$ is the moving part of the two-term complex in the perfect obstruction theory of $\bM_{g,n}(X,\beta)$. The indices $\alpha$ in the above virtual localization formula can be replaced by decorated graphs introduced later in this section.

We will focus on the case $T=\C^*$. The set-up of our localization computation is the following. Let $S$ be a smooth projective variety and $V$ a vector bundle on $S$. We would like to focus on $\Pp(V\oplus\sO)$. Denote
\[
X=\Pp(V\oplus\sO).
\]
There is a natural inclusion
\[
\Pp(V)\hookrightarrow X
\]
whose complement can be naturally identified as the total space of the vector bundle $V$, i.e.,
\begin{equation}\label{eqn:v}
V\cong X-\Pp(V).
\end{equation}

The $\C^*$ action on $V$ by scaling extends to $X$ in the obvious way. Under this $\C^*$ action, the fixed loci of $X$ are the following.
\begin{enumerate}
\item A copy of $S$ (denoted by $X_0$) which can be identified as the zero section of $V$ under the isomorphism in (\ref{eqn:v}).
\item A copy of $\Pp(V)$ (denoted by $X_\infty$) which can be identified as $X \backslash V$.
\end{enumerate}
We denote $X^T=X_0\bigcup X_\infty$. Their normal bundles are the following.
\begin{enumerate}
\item $N_{X_0/X}=V$. There is an induced fiberwise $\C^*$ action of character $1$ for any sub-representation.
\item $N_{X_\infty/X}=\sO_{\Pp(V)}(1)$. There is an induced fiberwise $\C^*$ action of character $-1$ for any sub-representation.
\end{enumerate}

\begin{prop}\label{proposition:equivcoh}
The equivariant cohomology
\[
H^*_{\C^*}(X)=H^*(S)[\lambda,h]/(h^r+c_1(V)h^{r-1}+\cdots{}+c_r(V))(h-\lambda),
\]
where $r=rank(V)$, $\lambda$ is the equivariant parameter, and $h$ is the equivariant cohomology class that restricts to $\lambda$ on $X_0$ and to $c_1(\sO_{\Pp(V)}(1))$ on $X_\infty$.
\end{prop}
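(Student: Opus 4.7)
The plan is to deduce this as an equivariant analogue of the standard computation of the cohomology of a projective bundle, combining Leray--Hirsch with the equivariant Grothendieck relation for $\Pp(V \oplus \sO)$.

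First I would pin down the class $h$ in geometric terms. Set $h_0 := c_1^{\C^*}(\sO_{\Pp(V \oplus \sO)}(1))$ and identify the tautological subbundle as an equivariant subbundle of $V \oplus \sO$. On $X_0$ it is the weight-$0$ summand $\sO$, giving $h_0|_{X_0} = 0$; on $X_\infty = \Pp(V)$ it is $\sO_{\Pp(V)}(-1) \subset V$, which inherits weight $1$, giving $h_0|_{X_\infty} = c_1(\sO_{\Pp(V)}(1)) - \lambda$. Thus $h := h_0 + \lambda$ has exactly the restrictions described in the statement.

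Next I would apply Leray--Hirsch to the fibration of Borel constructions $X \times_{\C^*} E\C^* \to S \times B\C^*$, which is a topological $\Pp^r$-bundle with ordinary fiber $\Pp^r$. Since $\lambda$ is pulled back from $B\C^*$ and restricts trivially to a fiber while $h_0$ restricts to $c_1(\sO_{\Pp^r}(1))$, the classes $1, h, \ldots, h^r$ restrict to an ordinary cohomology basis of each fiber. Leray--Hirsch then gives that $H^*_{\C^*}(X)$ is a free $H^*(S)[\lambda]$-module with basis $1, h, \ldots, h^r$.

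For the ring structure I would invoke the equivariant Grothendieck relation
\[
\sum_{i=0}^{r+1} c_i^{\C^*}(V \oplus \sO) \cdot h_0^{\,r+1-i} = 0.
\]
By the equivariant splitting principle, the equivariant Chern roots of $V \oplus \sO$ are $c_1(L_j) + \lambda$ from the summands of $V$ (each of weight $1$) together with $0$ from $\sO$, so the relation factors as $h_0 \prod_j (h_0 + c_1(L_j) + \lambda) = 0$. Substituting $h_0 = h - \lambda$ converts this into $(h^r + c_1(V) h^{r-1} + \cdots + c_r(V))(h - \lambda) = 0$. Since this single relation is monic of degree $r+1$ in $h$, the quotient $H^*(S)[\lambda, h] / (\text{relation})$ is itself free over $H^*(S)[\lambda]$ on $1, h, \ldots, h^r$, and the natural surjection from the presented ring onto $H^*_{\C^*}(X)$ matches these bases termwise, hence is an isomorphism. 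The only real obstacle is the careful bookkeeping of $\C^*$-weights to identify the shift $h = h_0 + \lambda$; once that convention is fixed, everything reduces to standard projective-bundle theory carried through equivariantly.
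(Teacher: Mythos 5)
Your argument is correct; the paper itself declines to give a proof (``We omit the standard computation''), and what you have written out is exactly that standard computation. The key points — recognizing the shift $h = h_0 + \lambda$ where $h_0 = c_{1}^{\C^*}(\sO_{\Pp(V\oplus\sO)}(1))$ by tracking the $\C^*$-weights of the tautological subbundle over $X_0$ and $X_\infty$, applying Leray--Hirsch to the Borel fibration $X\times_{\C^*}E\C^*\to S\times B\C^*$ to get freeness over $H^*(S)[\lambda]$ with basis $1,h,\ldots,h^r$, and substituting $h_0=h-\lambda$ into the factored equivariant Grothendieck relation $h_0\prod_j(h_0+c_1(L_j)+\lambda)=0$ — are all correct and yield the stated presentation.
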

We omit the standard computation.

\subsection{Decorated graph}
Consider an invariant stable map $f:(C,x_1,\ldots{},x_n)\rightarrow X$. We can associate a graph $\Gamma$ (with decorations) to it. Let $V(\Gamma)$ and $E(\Gamma)$ be the set of vertices and edges, respectively. The rule of the assignment is the following.

\medskip\noindent {\bf Vertices}:
\begin{itemize}
\item The connected components in $f^{-1}(X^T)$ are either curves or points. Assign a vertex $v$ to a connected component $\comp_v$ in $f^{-1}(X^T)$.
\item Define $p_v=0$ or $\infty$ depending on whether $f(\comp_v)\subset X_0$ or $X_\infty$, respectively.
\item Define $\beta_v=f_*[\comp_v]\in N_1(X_{p_v})$ the numerical class of $f_*([\comp_v])$ (if $f(\comp_v)$ is a point, $[\comp_v]=0$).
\item Define $g_v$ to be the arithmetic genus of $\comp_v$.
\item For the $i$-th marked point ($i=1,\ldots{},n$), define $s_i=v$ if $x_i\in \comp_v$.
\end{itemize}
\noindent {\bf Edges}:
\begin{itemize}
\item Assign each component of $C-\mathop{\bigcup}\limits_{v\in V(\Gamma)}\comp_v$ an edge $e$. Let $\comp_e$ be the closure of the corresponding component.
\item We write $\beta_e=f_*[\comp_e]\in N_1(X)$.
\end{itemize}

We also introduce the following.
\begin{nota*}
\begin{itemize}
\item $\beta_\Gamma=\sum\limits_{e\in E(\Gamma)}\beta_e+\sum\limits_{v\in V(\Gamma)}\beta_v$
\item $g_\Gamma=\sum\limits_{v\in V(\Gamma)} g_v+h^1(\Gamma)$ where $h^1(\Gamma)$ is the dimension of first singular cohomology of $\Gamma$ as a one-dimensional CW complex.
\item $n_v$ is the number of markings on the component $\comp_v$.
\end{itemize}
\end{nota*}

The graph assignment is invariant under deformations of invariant stable maps. Thus, we label each connected component of fixed loci of $\bM_{g,n}(X,\beta)$ by the decorated graphs above. Denote the component by $\bM_\Gamma$ and the virtual normal bundle $N_\Gamma^{vir}$. Now one can construct the fixed component as certain fiber product according to the decorated graph, and write down $\displaystyle\frac{1}{e(N_\Gamma)^{vir}}$ more explicitly. The general framework is treated in \cite{MM}.

In order to write the virtual localization formula, we need to introduce a few more definitions (following \cite[Definition 53]{liu}).
\begin{defn}\label{defn:stable}
A vertex $v\in V(\Gamma)$ is called stable if $2g_v-2+val(v)+n_v>0$ or $\beta_v\neq 0$. Let $V^S(\Gamma)$ be the set of stable vertices in $V(\Gamma)$. Let
\begin{align*}
V^1(\Gamma) &= \{ v\in V(\Gamma) ~|~ g_v=0, val(v)=1, n_v=0, \beta_v=0 \}, \\
V^{1,1}(\Gamma) &= \{ v\in V(\Gamma) ~|~ g_v=0, val(v)=n_v=1, \beta_v=0 \}, \\
V^{2}(\Gamma) &= \{ v\in V(\Gamma) ~|~ g_v=0, val(v)=2, n_v=0, \beta_v=0 \}.
\end{align*}
The union of $V^1(\Gamma), V^{1,1}(\Gamma), V^2(\Gamma)$ is the set of unstable vertices.
\end{defn}

Define an equivalence relation $\sim$ on the set $E(\Gamma)$ by setting $e_1\sim e_2$ if there is a $v\in V^2(\Gamma)$ such that $e_1, e_2\in E_v$.
\begin{defn}
Define $\overline E(\Gamma)=E/\sim$.
\end{defn}

One easily sees that a class $[e]\in \overline E(\Gamma)$ consists of a chain of edges, say $e_1,e_2,\ldots{},e_m$ such that $e_i$ and $e_{i+1}$ intersect at a $v_i\in V^2(\Gamma)$. There are also two vertices $v_0\in e_1$ and $v_m\in e_m$ such that $v_0, v_m\not\in V^2(\Gamma)$.
\begin{defn}\label{defn:vun}
Define $V^{2}_{[e]}=\{v_1,\ldots{},v_{m-1}\}$ and $V^{end}_{[e]}=\{v_0, v_m\}$.
\end{defn}

\begin{defn}\label{defn:tail}
Define $\overline E^{tail}(\Gamma)$ be the set of edge classes $[e]\in \overline E(\Gamma)$ such that $V^{end}_{[e]}\bigcap V^1(\Gamma)\neq \emptyset$ or $V^{end}_{[e]}\bigcap V^{1,1}(\Gamma)\neq \emptyset$.
\end{defn}

\begin{defn}\label{defn:sides}
Define $V^{\infty}(\Gamma)=\{v\in V(\Gamma) ~|~ p_v=\infty\}$ and $V^{0}(\Gamma)=\{v\in V(\Gamma) ~|~ p_v=0\}$.
\end{defn}

Definitions \ref{defn:stable}-\ref{defn:sides} are used to describe some summation index in the virtual localization formula.

\section{Computing invariants on a projective bundle}\label{section:recursion}

Let $S$ be the smooth projective variety. In this section, we focus on a single vector bundle $V$. Write $\pi:\Pp(V)\rightarrow S$ to be the projection. Later in this section, we recursively establish an algorithm genus by genus. For each genus $g$, there are some additional data we need to choose before the recursion begins.
\begin{lem}
For any $g\in \Z_{\geq 0}$, there is a sufficiently ample line bundle $L_g\in \text{Pic}(S)$ such that for any $\beta\in \NE(\Pp(V))$ with $\pi_*\beta\neq 0$, we have
\[
( \beta , \sO_{\Pp(V)}(1)+\pi^*L_g ) > \text{max}\{g-1,0\},
\]
(where $\NE(\cdots{})$ means the Mori cone).
\end{lem}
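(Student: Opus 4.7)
The plan is to exploit Assumption \ref{assumption}, which gives that $H := \sO_{\Pp(V)}(1)$ is itself ample on $\Pp(V)$, and then add a sufficiently ample pullback from $S$ to uniformly control those effective curve classes $\beta$ whose $H$-degree is small. For $g=0$ the bound $\max\{g-1,0\}$ equals $0$, and then $L_0 = \sO_S$ already works since ampleness of $H$ gives $(\beta,H)\geq 1$ for every nonzero integral $\beta\in \NE(\Pp(V))$. Hence one may assume $g\geq 1$ with target bound $g-1\geq 0$.

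The first key step is a boundedness observation. Since $H$ is ample, Kleiman's criterion implies that $(\,\cdot\,,H)$ is a strictly positive linear functional on $\overline{\NE}(\Pp(V))\setminus\{0\}$, so the slab $\{\beta\in\overline{\NE}(\Pp(V)) : (\beta,H)\leq g-1\}$ is compact in $N_1(\Pp(V))_{\R}$. Intersecting with the lattice of integral curve classes and further restricting to those with $\pi_*\beta\neq 0$ produces a finite set
\[
B_g = \{\beta\in \NE(\Pp(V)) : \pi_*\beta\neq 0,\ (\beta,H)\leq g-1\}.
\]

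Next I would choose any ample $L\in \text{Pic}(S)$ (which exists since $S$ is projective). For every $\beta\in B_g$, the class $\pi_*\beta$ is a nonzero effective integral curve class on $S$, so $(\pi_*\beta,L)\geq 1$. Pick a positive integer $n$ with $n > g-1$ and set $L_g = nL$; this is ample, in particular nef. The required inequality then follows from the projection formula $(\beta,\pi^*L_g)=(\pi_*\beta,L_g)$ by a case split: if $\beta\in B_g$, then
\[
(\beta,H+\pi^*L_g)\geq 1 + n(\pi_*\beta,L)\geq 1+n > g-1;
\]
whereas if $(\beta,H)\geq g$, then nefness of $L_g$ together with effectivity of $\pi_*\beta$ gives $(\beta,\pi^*L_g)\geq 0$ and hence $(\beta,H+\pi^*L_g)\geq g > g-1$.

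The only substantive ingredient is the finiteness of $B_g$, and I expect this to be the main (quite mild) hurdle. It follows cleanly from Kleiman's criterion applied to the ample class $H$ granted by Assumption \ref{assumption}; all remaining manipulations are routine.
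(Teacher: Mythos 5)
Your proof is correct, but it is built around a boundedness observation that it never actually uses, and the paper's proof shows the detour is unnecessary. You single out ``the finiteness of $B_g$'' as the key ingredient, yet your Case~1 estimate $(\beta,H+\pi^*L_g)\geq 1+n(\pi_*\beta,L)\geq 1+n>g-1$ rests only on two facts: $(\beta,H)\geq 1$ (ampleness of $H=\sO_{\Pp(V)}(1)$ granted by Assumption~\ref{assumption}) and $(\pi_*\beta,L)\geq 1$ (ampleness of $L$ plus $\pi_*\beta$ being a nonzero effective integral curve class on $S$). Both facts hold for \emph{every} $\beta\in\NE(\Pp(V))$ with $\pi_*\beta\neq 0$, not merely those in the slab $B_g$; the Kleiman-compactness argument, the finiteness of $B_g$, and the case split on whether $(\beta,H)\geq g$ are all dead code. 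The paper's proof is exactly your Case~1, streamlined: apply the projection formula to write $(\beta,\sO_{\Pp(V)}(1)+\pi^*L_g)=(\beta,\sO_{\Pp(V)}(1))+(\pi_*\beta,L_g)$, note that the first summand is nonnegative by Assumption~\ref{assumption}, and take $L_g=mL$ with $m>g-1$ so that the second summand is $\geq m>g-1$. No compactness input is required.
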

\begin{proof}
For an ample $L$, we always have $(\overline\beta,L)\geq 1$ for any nonzero $\overline\beta \in \NE(S)$. By replacing $L$ by its multiple $mL$, we can assume that $(\overline\beta,L)\geq m$ holds for any integer $m$ and any nonzero $\overline\beta \in \NE(S)$. Back to the lemma, for any $\beta \in \NE(\Pp(V))$, notice that
\[
( \beta , \sO_{\Pp(V)}(1)+\pi^*L_g ) = ( \beta , \sO_{\Pp(V)}(1) ) + ( \beta , \pi^*L_g ) = ( \beta , \sO_{\Pp(V)}(1) ) + ( \pi_*\beta , L_g ),
\]
where the last equality follows from the projection formula and the last intersection pairing $( \pi_*\beta , L_g )$ is evaluated in $S$. Since $g-1$ is a fixed integer, the inequality can be achieved by choosing $L$ to be a sufficiently high multiple of an ample line bundle.
\end{proof}

Now that the collection of ample line bundles $\{L_g\}_{g\in \N}$ are chosen, we can state the following theorem.
\begin{thm}\label{recursive}
Let $f\in N_1(\Pp(V))$ be the class of a line in a fiber. Suppose the fiber integrals $\langle \ldots{} \rangle_{g,n,kf}^{\Pp(V)}$ are known. There is an algorithm determining genus $g$ untwisted invariants of $\Pp(V)$ from the genus $g'$ twisted invariants of $S$ (twisted by $V\otimes L_{g''}$ where $g''\leq g$) with $g'\leq g$. Furthermore, besides the twisted invariants of $S$, this algorithm only depends on the cohomology rings $H^*(S)$ and $H^*(\Pp(V))$, the cohomology class $c_1(\sO_{\Pp(V)}(1))$, the group of numerical curve classes $N_1(\Pp(V))$ with its intersection pairing with $H^2(\Pp(V))$, the pull-back morphism $\pi^*:H^*(S)\rightarrow H^*(\Pp(V))$, and the Mori cone $\NE(\Pp(V))$.
\end{thm}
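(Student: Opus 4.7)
My plan is to execute the master-space localization sketched in the introduction. Set $X = \Pp(V\oplus\sO)$ with the fiberwise $\C^*$-action of Section~\ref{section:localization}, so that $X^{\C^*} = X_0 \sqcup X_\infty$ with $X_0 \cong S$ (normal bundle $V$) and $X_\infty \cong \Pp(V)$ (normal bundle $\sO_{\Pp(V)}(1)$). The recursion runs on $\beta \in \NE(\Pp(V))$ within a fixed genus $g$, with induction in the direction of increasing $(\sO_{\Pp(V)}(1)+\pi^*L_g,\beta)$. The base cases are the fiber-class invariants $\langle \cdots \rangle^{\Pp(V)}_{g,n,kf}$, which are supplied by hypothesis, so one may assume $\pi_*\beta \ne 0$. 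Lift $\beta$ to $\tilde\beta \in N_1(X)$ via $X_\infty \hookrightarrow X$ and lift each $\alpha_i$ to an equivariant class $\tilde\alpha_i \in H^*_{\C^*}(X)$ using Proposition~\ref{proposition:equivcoh}; both lifts are prescribed purely from the topological data listed in the statement.

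The key equivariant integral is
\[
I := \int_{[\bM_{g,n+m}(X,\tilde\beta)]^{vir}} \Bigl(\prod_{i=1}^n \psi_i^{k_i}\, ev_i^*\tilde\alpha_i\Bigr) \cdot \prod_{j=1}^m ev_{n+j}^*\,c_1\bigl(\sO_X(1)\otimes\tilde\pi^*L_g\bigr),
\]
with $\tilde\pi : X \to S$ and with $m \ge 1$ chosen so that the integrand has equivariant degree strictly below $\mathrm{vdim}\,\bM_{g,n+m}(X,\tilde\beta)$; such $m$ exists by a direct dimension count. Then $I = 0$ since $H^{<0}_{\C^*}(\mathrm{pt}) = 0$. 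Applying the virtual localization formula of \cite{GP} as recalled in Section~\ref{section:vloc}, the identity $0 = I$ decomposes as a sum over decorated graphs that I would sort into three types: (i) the \emph{principal} graph consisting of a single stable vertex over $X_\infty$ carrying $(g,n+m,\beta)$; (ii) graphs supported entirely over $X_0$; and (iii) \emph{mixed} graphs containing at least one edge. For type (i) the fixed component is $\bM_{g,n+m}(\Pp(V),\beta)$, the virtual normal bundle contributes an $\sO_{\Pp(V)}(1)$-twist with parameter $-\lambda$, and the divisor equation applied to the $m$ extra insertions pulls out the numerical factor $(\sO_{\Pp(V)}(1)+\pi^*L_g,\beta)^m$; because the remaining insertions then have total degree equal to $\mathrm{vdim}\,\bM_{g,n+m}(\Pp(V),\beta)$, Lemma~\ref{lemma:untw} identifies the appropriate $\lambda$-coefficient of this contribution with the desired untwisted invariant $\langle\psi^{k_i}\alpha_i\rangle^{\Pp(V)}_{g,n,\beta}$. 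Type-(ii) contributions package into genus-$g'\le g$ twisted GW invariants of $S$ of the kind listed in the theorem, with the $L_g$-insertions appearing as explicit divisor factors. Type-(iii) contributions factorize into vertex contributions at $X_0$ (again twisted invariants of $S$), vertex contributions at $X_\infty$ (untwisted invariants of $\Pp(V)$ whose curve classes have strictly smaller $(\sO_{\Pp(V)}(1)+\pi^*L_g)$-degree than $\beta$, since each non-trivial edge absorbs a positive multiple of the fiber class), and edge contributions that reduce to universal multiple-cover integrals on the fibers $\Pp^r \subset X$ and are determined by $H^*(S)$, $\pi^*$ and the Chern classes of $V$.

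Rearranging $0 = I$ and dividing by the extraction factor $(\sO_{\Pp(V)}(1)+\pi^*L_g,\beta)^m$, which is a strictly positive integer by the lemma preceding the statement and hence invertible, expresses $\langle\psi^{k_i}\alpha_i\rangle^{\Pp(V)}_{g,n,\beta}$ as a universal expression in the type-(ii) and type-(iii) contributions; these lie either in the input data or in the inductive hypothesis. Well-foundedness of the induction on the ample degree $(\sO_{\Pp(V)}(1)+\pi^*L_g,\beta)$, together with the fiber-class base cases, completes the recursion. The main obstacle I anticipate is the detailed graph bookkeeping: one must verify that every $\Pp(V)$-vertex in a mixed graph genuinely carries a curve class of strictly smaller ample degree than $\beta$, and that every edge contribution can be read off from the listed topological data alone. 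Both points reduce to standard properties of $\C^*$-invariant curves in $X = \Pp(V\oplus\sO)$: such a curve either lies entirely in a fixed locus or is a multiple cover of a fiberwise line joining $X_0$ and $X_\infty$, whose class in $N_1(X)$ is determined by the fiber class of $\tilde\pi$ and whose multiple-cover integral is the standard computation on $\Pp^1$ weighted by Chern roots of $V$.
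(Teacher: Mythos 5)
Your high-level strategy (master space $X=\Pp(V\oplus\sO)$, equivariant vanishing for degree reasons, graph recursion) is the right one, but there is a genuine gap in the step that makes the localization integral vanish. You propose to insert $m\ge 1$ copies of the divisor $c_1(\sO_X(1)\otimes\tilde\pi^*L_g)$ and claim that ``such $m$ exists by a direct dimension count.'' This cannot work: each extra divisor marked point raises both the (complex) degree of the integrand and the virtual dimension of $\bM_{g,n+m}(X,\tilde\beta)$ by exactly one, so the gap
\[
\dim[\bM_{g,n+m}(X,\tilde\beta)]^{vir}-\Bigl(\sum k_i+\sum\deg_\C\tilde\alpha_i+m\Bigr)
= 1-g+(\beta,\sO_X(1)|_{X_\infty})
\]
is independent of $m$. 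No choice of $m$ will turn a non-negative gap negative or a negative gap positive. The paper's actual mechanism is to \emph{change the master space}: it replaces $V$ by $V\otimes L_g^{-1}$ for $L_g$ sufficiently ample (Assumption~\ref{assumption:2}), so that $\sO_X(1)|_{X_\infty}=\sO_{\Pp(V)}(1)+\pi^*L_g$ and hence $(\beta,\sO_X(1)|_{X_\infty})>g-1$ whenever $\pi_*\beta\neq 0$; this makes the displayed gap strictly positive already for $m=0$, giving Lemma~\ref{lemma:vanish}. You cite the lemma about $L_g$ but use it only to guarantee that the extraction factor $(\sO_{\Pp(V)}(1)+\pi^*L_g,\beta)^m$ is nonzero, which is not where $L_g$ is needed; the twist must be built into $X$ itself before localizing.

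Once that is fixed, your $m$ extra divisor insertions are unnecessary and in fact introduce a secondary complication you do not address: when the original insertions carry $\psi$-classes, the divisor equation has correction terms, so you cannot simply ``pull out the numerical factor $(\sO_{\Pp(V)}(1)+\pi^*L_g,\beta)^m$'' from the principal-graph contribution. The paper sidesteps this entirely by choosing the insertion degrees to match $\dim[\bM_{g,n}(\Pp(V),\beta_0)]^{vir}$ with no extra marked points, so that Lemma~\ref{lemma:untw} immediately identifies the leading $\lambda$-coefficient of the $\Gamma_0$-contribution with the desired untwisted invariant. Your ordering of the recursion by the ample degree $(\sO_{\Pp(V)}(1)+\pi^*L_g,\beta)$ rather than the paper's partial order $\beta<\beta_0$ on $\NE(\Pp(V))$ is a legitimate variant, and your discussion of the edge contributions is essentially correct; the one real defect is the misidentification of what makes the master-space integral vanish.
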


We rule out the fiber integrals $\langle \ldots{} \rangle_{g,n,kf}^{\Pp(V)}$ because they serve as the initial case for an induction, obtained by a method different from localization. We summarize the structure of this section as follows. We study fiber integrals in Section \ref{section:fiber}. Next, we prove Theorem \ref{recursive} by introducing the set-up in sections \ref{section:setup}, determining insertions to be used in the master space in \ref{section:lifting}, and writing out the computation in \ref{section:loc}.

\subsection{Fiber classes}\label{section:fiber}

In this section, we determine integrals of the form $\langle \ldots{} \rangle_{g,n,kf}^{\Pp(V)}$ where $f\in N_1(\Pp(V))$ is the line class in a fiber. Similar to  \cite[section 1.2]{MP}, $\bM_{g,n}(\Pp(V),kf)$ is fibered over $S$, i.e. there is a morphism
\[
p:\bM_{g,n}(\Pp(V),kf) \rightarrow S.
\]
In fact, $\bM_{g,n}(\Pp(V),kf)=\mathcal P\times \bM_{g,n}(\Pp^r,k)/GL(r+1)$ where $\mathcal P$ is the principal $GL(r+1)$ bundle corresponding to $V$, and $GL(r+1)$ acts on the product diagonally. We can decompose the virtual class as
\[
[\bM_{g,n}(\Pp(V),kf)]^{vir}=e(\mathbb E \boxtimes T_S)\cap [\bM_{g,n}(\Pp(V),kf)]^{vir_p},
\]
where $\mathbb E$ is the Hodge bundle and $[\bM_{g,n}(\Pp(V),kf)]^{vir_p}$ is the relative virtual class. This reduces our problem to the determination of integrals against $[\bM_{g,n}(\Pp(V),kf)]^{vir_p}$. This relative virtual class can be described in terms of something that is well-understood.

Without breaking Assumption \ref{assumption}, we can replace $V$ by $V\otimes L^{-1}$ where $L$ is sufficiently ample. As a result, we can assume $V^\vee$ is globally generated and there is a surjection map $\sO^N\rightarrow V^\vee$ for some large integer $N$. By taking the dual, we embed $V$ into a trivial bundle
\[
V\hookrightarrow \sO^N.
\]
By the universal property of Grassmannian, it induces a morphism
\[
f:S\rightarrow Gr(r+1,N)
\]
such that $f^*U=V$ where $U$ is the tautological bundle of rank $r+1$. We write $G=Gr(r+1,N)$ in short. In the meantime, we have the $\bM_{g,n}(\Pp_G(U),kf)$ whose base change via $f$ is isomorphic to $\bM_{g,n}(\Pp(V),kf)$. Now we have the diagram
\[\xymatrix{
\bM_{g,n}(\Pp_S(V),kf) \ar[d]^p \ar[r]^{\bar f} & \bM_{g,n}(\Pp_G(U),kf) \ar[d]^{\bar p} \\
S \ar[r]^f & G
}
\]

One sees that $\bar f^![\bM_{g,n}(\Pp_G(U),kf)]^{vir_{\bar p}}=[\bM_{g,n}(\Pp_S(V),kf)]^{vir_p}$. As a result, 
\[
\bar p^* f_! \sigma \cap [\bM_{g,n}(\Pp_G(U),kf)]^{vir_{\bar p}}=\bar p^*\sigma \cap \bar f_! [\bM_{g,n}(\Pp_S(V),kf)]^{vir_p},
\]
where $\sigma\in H^*(S)$.

An insertion is in the form $h^i\pi^*\sigma\in H^*(\Pp_S(V))$ where $h=c_1(\sO_{\Pp_S(V)}(1))$, $\sigma\in H^*(S)$. It corresponds to a factor $ev_j^*(h^i\pi^*\sigma) \in H^*(\bM_{g,n}(\Pp_S(V),kf))$ in the integrand. Notice that $ev_j^*\pi^*\sigma=p^*\sigma$, and $ev_j^*h=\bar f^*(ev_j^*h')$ where $h'=c_1(\sO_{\Pp_G(U)}(1))$. In view of this, we can push forward the integrands against $[\bM_{g,n}(\Pp_S(V),kf)]^{vir_p}$ and reduce it to an integral against $[\bM_{g,n}(\Pp_G(U),kf)]^{vir_{\bar p}}$. On the other hand, $[\bM_{g,n}(\Pp_G(U),kf)]^{vir}$ and $[\bM_{g,n}(\Pp_G(U),kf)]^{vir_{\bar p}}$ are computable since we can use the localization on the Grassmannian.

This method is enough for us to establish our main theorem in the special cases of fiber classes.
\begin{lem}
Theorem \ref{main} holds if $\beta=kf$ for some $k\in\Z_{>0}$.
\end{lem}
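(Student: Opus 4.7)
The plan is to reduce both $\langle\cdots\rangle^{\Pp(V_1)}_{g,n,kf}$ and $\langle\FF(\cdots)\rangle^{\Pp(V_2)}_{g,n,kf}$ to integrals on a single common moduli space $\bM_G := \bM_{g,n}(\Pp_G(U), kf)$ over a Grassmannian $G = Gr(r+1, N)$, and then use $c(V_1) = c(V_2)$ to conclude they are equal. After simultaneously twisting $V_1, V_2$ by the same sufficiently ample $L^{-1}$---which changes neither $\Pp(V_i)$ nor the equality of total Chern classes---both $V_i^{\vee}$ are globally generated by a common number $N$ of sections, producing classifying morphisms $f_i : S \to G$ with $f_i^* U = V_i$. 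The Cartesian squares with vertical maps $p : \bM_i \to S$ and $\bar p : \bM_G \to G$ (where $\bM_i := \bM_{g,n}(\Pp(V_i), kf)$) then yield $\bar f_i^{!} [\bM_G]^{vir_{\bar p}} = [\bM_i]^{vir_p}$, exactly as recorded in the text.

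The second step is to rewrite the integrand on $\bM_i$ in the form $\bar f_i^{*}(\Gamma) \cdot p^{*}(\tau)$ with $(\Gamma, \tau) \in H^*(\bM_G) \times H^*(S)$ independent of $i$. By Leray--Hirsch each insertion expands as $\alpha_\ell = \sum_s h_1^{a_{\ell,s}} \pi_1^{*}\sigma_{\ell,s}$; the factor $\psi_\ell^{k_\ell}\, ev_\ell^{*}(h_1^{a_{\ell,s}})$ is the $\bar f_i$-pullback of the corresponding class in $H^*(\bM_G)$ built from $h' := c_1(\sO_{\Pp_G(U)}(1))$, while $ev_\ell^{*}(\pi_1^{*}\sigma_{\ell,s}) = p^{*}\sigma_{\ell,s}$ sits in the image of $p^{*}$. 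The extra factor $e(\mathbb{E} \otimes p^{*} T_S)$ coming from $[\bM_i]^{vir} = e(\mathbb{E} \boxtimes T_S) \cap [\bM_i]^{vir_p}$ expands, as a polynomial in $c(\mathbb{E})$ (pulled back from $\bM_G$) and $c(T_S)$ (from $S$), into a finite sum of terms $\bar f_i^{*}(A_I) \cdot p^{*}(B_I)$. Because $\FF$ is defined by $\pi_1^{*}\sigma \mapsto \pi_2^{*}\sigma$ and $h_1 \mapsto h_2$, the integrand on the $V_2$-side takes exactly the same shape $\bar f_2^{*}(\Gamma) \cdot p^{*}(\tau)$ with the same $\Gamma$ and $\tau$.

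The endgame is then a short chain of projection formulas. From $\bar f_i^{!} [\bM_G]^{vir_{\bar p}} = [\bM_i]^{vir_p}$ I get
\[
\int_{[\bM_i]^{vir_p}} \bar f_i^{*}\Gamma \cdot p^{*}\tau = \int_{[\bM_G]^{vir_{\bar p}}} \Gamma \cdot \bar f_{i,*}(p^{*}\tau).
\]
Proper base change on the Cartesian square gives $\bar f_{i,*}(p^{*}\tau) = \bar p^{*}(f_{i,*}\tau)$, and a further projection formula collapses the integral to $\int_G f_{i,*}\tau \cdot \eta = \int_S \tau \cdot f_i^{*}\eta$, where $\eta := \bar p_{*}(\Gamma \cap [\bM_G]^{vir_{\bar p}}) \in H^*(G)$ depends only on $\Gamma$ and is therefore the same for $i = 1, 2$. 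Since $H^*(G)$ is generated as a ring by the Chern classes of $U$ and $f_i^{*} c_k(U) = c_k(V_i)$, the hypothesis $c(V_1) = c(V_2)$ forces $f_1^{*} = f_2^{*}$ on $H^*(G)$; the two right-hand sides therefore agree, proving the lemma.

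The main obstacle I anticipate is bookkeeping rather than substance: verifying the Cartesian-square proper base change identity $\bar f_{i,*} p^{*} = \bar p^{*} f_{i,*}$ at the level of the relative virtual class (implicit in the text's description $\bM_{g,n}(\Pp(V), kf) = \mathcal P \times \bM_{g,n}(\Pp^r, k) / GL(r+1)$), and keeping the splitting-principle expansion of $e(\mathbb{E} \otimes p^{*} T_S)$ organized well enough that the decomposition $\bar f_i^{*}(\Gamma) \cdot p^{*}(\tau)$ is manifestly $i$-independent. Once these are in place, the argument is essentially formal.
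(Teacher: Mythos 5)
Your proposal is correct and follows essentially the same route as the paper: twist by $L^{-1}$ to embed $V_i\hookrightarrow\sO^N$, pass to classifying maps $f_i:S\to Gr(r+1,N)$, decompose the integrand into factors pulled back from $\bM_{g,n}(\Pp_G(U),kf)$ and from $S$, and conclude from the fact that $c(V_1)=c(V_2)$ forces $f_1^*=f_2^*$ on $H^*(G)$ since Chern classes of $U$ generate that ring. Your version just spells out the projection-formula bookkeeping in slightly more detail than the paper's terse final paragraph.
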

\begin{proof}
Given $V$, $V'$ such that $c(V)=c(V')$, we can twist them by the inverse of a sufficiently ample line bundle to carry out all the above constructions at the same time. In particular, we have morphisms
\[
f, f': S\rightarrow G=Gr(r+1,N)
\]
for some $N$ such that $f^*U=V$ and $f'^*U=V'$. Notice that the Chern classes $c_i(U)$ generates the cohomology ring $H^*(G)$. Because $f^*c_i(U)=f'^*c_i(U)$, we conclude that $f, f'$ induce the same pull-back morphisms between cohomology rings, i.e., $f^*=f'^*\in Hom(H^*(G), H^*(S))$. As a result, $f_!(\sigma)=f'_!(\sigma)$ for any $\sigma\in H^*(S)$. Therefore,if we carry out all the above reductions for $V$ and $V'$ in parellel and push the integrands forward into $\bM_{g,n}(\Pp_G(U),kf)$, we get the same integrals.
\end{proof}

\subsection{Set-ups for the main algorithm}\label{section:setup}

Starting from this subsection, we work towards Theorem \ref{recursive}. We say
\[
\beta<\beta'\in \NE(X)
\]
if either $(\pi_X)_*\beta'-(\pi_X)_*\beta \in \NE(S) \backslash \{0\}$ or $\beta'-\beta \in \NE(X) \backslash \{0\}$. We are ready to state the induction hypothesis.
\begin{ind*}
We induct on the genus and the numerical curve classes. Fix a genus $g_0$ and an effective curve class $\beta_0\in \NE(\Pp(V))$. For any $g, \beta$ such that $\beta<\beta_0$ and $g\leq g_0$, assume that the formulas of invariants of the form $\langle \ldots{} \rangle^{\Pp(V)}_{g,n,\beta}$ in terms of the ones of the form $\langle \ldots{} \rangle^{S,tw,V\otimes L_{g'}^{-1}}_{g',n',\overline\beta'}$ (where $g'\leq g$)  are found.

\end{ind*}



Since fiber classes are already determined in the previous section, we may assume the following.
\begin{assu}\label{assumption:fiber}
For the rest of the section, we assume $\pi_*\beta_0 \neq 0$.
\end{assu}

\begin{goal*}
Express $\langle \psi^{k_1}\alpha_i ,\ldots{}, \psi^{k_n}\alpha_n \rangle_{g_0,n,\beta_0}^{\Pp(V)}$ in terms of Gromov--Witten invariants of the form $\langle \ldots{} \rangle_{g,n',\beta}^{\Pp(V)}$ with $\beta<\beta_0$ and $g\leq g_0$, plus the twisted invariants of $S$.
\end{goal*}


From now on, the genus $g_0$ is fixed and we stick to the line bundle $L_{g_0}$ throughout the rest of the section. Consider the Gromov--Witten invariants with target $\Pp_S(V\otimes L_{g_0}^{-1}\oplus \sO)$. To simplify notations, we may replace $V$ by $V\otimes L_{g_0}^{-1}$ just as in Assumption \ref{assumption}, which is summarized as the following. 
\begin{assu}\label{assumption:2}
For the rest of the section, in addition to the induction hypothesis and the ampleness of $\sO_{\Pp_S(V)}(1)$, we also assume $( \beta , \sO_{\Pp_S(V)}(1) ) > g_0-1$ for any $\beta\in \NE(\Pp_S(V))$ such that $\pi_*\beta\neq 0$.
\end{assu}

Recall that in the previous section, we wrote $X=\Pp(V\oplus \sO)$ and introduced $X_\infty$, $X_0$ along with other notations. We continue to use those notations. Recall we have the isomorphisms
\[
\Pp(V)\cong X_\infty\subset X, S\cong X_0\subset X.
\] 
We are going to apply virtual localization to certain invariants on $X$ and obtain relations between invariants of $\Pp(V)$ and the ones of $S$. In order to write down the localization formula, we need to fix some notations.
\begin{nota*}
\begin{enumerate}

\item To simplify notations, we write $\sO(1)=\sO_{\Pp(V)}(1)$ and $H=c_1(\sO_{\Pp(V)}(1))$.

\item Given a decorated graph $\Gamma$, for any $v\in \Gamma$, we label the markings on the component $\comp_v$ by $m_1^v,\ldots{},m_{n_v}^v$.

\item Choose a $\C$-basis $\{T_i\}$ for the cohomology ring $H^*(S)$ with dual basis written as $\{T^i\}$ (under Poincar\'e pairing).

\end{enumerate}
\end{nota*}

Besides, the following convention is used throughout the rest of the section.
\begin{conv}
Let $\iota:X_\infty\hookrightarrow X$ be the inclusion. Since we have the natural identification of the numerical curve classes $\iota_*:N_1(X_\infty)\cong N_1(X)$, we won't distinguish curve classes in $X$ and $X_\infty$ notation-wise. Furthermore, given $\beta\in N_1(X_\infty)$, since $(\beta, \sO_{X_\infty}(1))=(\iota_*\beta, \sO_X(1))$, we write $(\beta,\sO(1))$ for this intersection pairing without distinguishing whether it is evaluated on $X_\infty$ or $X$.
\end{conv}

\subsection{Lifting of insertions}\label{section:lifting}

Given a cohomology class $\alpha\in H^*(X_\infty)$, we lift $\alpha$ to $\tilde\alpha \in H^*_{\C^*}(X)$ in the way described in the following. 

$\alpha$ can always be written as 
\[
\alpha=H^e \cup \pi^*\overline{\alpha}\in H^*(X_{\infty}),
\] 
where $\overline\alpha\in H^*(S)$. Under the presentation in Proposition \ref{proposition:equivcoh}, recall $h=c_{1,T}(\sO_{\Pp(V\oplus \sO)}(1))$. We see $h$ is a lifting of $H$. Define 
\[
\tilde\alpha=h^e\cup \pi_X^*\overline{\alpha}\in H^*_{\C^*}(X),
\]
where $\pi_X:X\cong \Pp(V\oplus \sO) \rightarrow S$ is the projection. Obviously $\tilde\alpha$ restricts to $\alpha$ on $X_\infty$ and to $\lambda^e\pi^*\overline{\alpha}$ on $X_0$.

\subsection{Localization formula}\label{section:loc}

Let's introduce some notations.
\begin{nota*}
\begin{enumerate}
\item Define
\[
{\bf c}_V(x)=x^r+c_1(V)x^{r-1}+\cdots{}+c_r(V),
\]
where $r=rank(V)$.

\item When we fix a class $[e]\in \bar E(\Gamma)$, denote $v_+, v_-$ to be the two vertices in $V_{[e]}^{end}$ (whichever is arbitrary). 

\item Define $\iota_{\pm}:\Pp(V)\rightarrow X_0$ to be the projection if $p_{v_{\pm}}=0$, or $\iota_{\pm}:\Pp(V)\rightarrow X_\infty$ the identity map if $p_{v_{\pm}}=\infty$, respectively. 

\item Let $e_{\pm}$ be the edge in the class $[e]$ that contains $v_{\pm}$, respectively (they can be the same). 

\item For a vertex $v$, define $Vert(v)={\bf c}_V(\lambda),\delta_v=1$ if $p_v=0$, or $Vert(v)=H-\lambda, \delta_v=-1$ if $p_v=\infty$, respectively.
\end{enumerate}
\end{nota*}

Consider the equivariant Gromov--Witten invariant
\begin{equation}\label{eqn:comp}
\langle \psi^{k_1}\tilde\alpha_1,\ldots{},\psi^{k_n}\tilde\alpha_n \rangle^{X}_{g_0,n,\beta_0} \in \C[\lambda].
\end{equation}
Now let's write down the virtual localization formula:
\begin{align}\label{eqn:loc}
\begin{split}
& \langle \psi^{k_1}\tilde\alpha_1,\ldots{},\psi^{k_n}\tilde\alpha_n \rangle^{X}_{g_0,n,\beta_0}\\
=& \langle \psi^{k_1}\alpha_1,\ldots{},\psi^{k_n}\alpha_n \rangle^{X_\infty,tw,\sO(1)}_{g_0,n,\beta_0} +\\
& \sum\limits_{\Gamma\neq \Gamma_0}\displaystyle\frac{1}{Aut(\Gamma)}\prod\limits_{v\in V^S(\Gamma)} \sum\limits_{\{i_{[e]}\}_{[e]\in \overline E(\Gamma)-\overline E^{tail}(\Gamma)}}\langle \psi^{k_{m_1^v}}\tilde\alpha_{m_1^v}|_{X_{p_v}},\ldots{},\psi^{k_{m_{n_v}^v}}\tilde\alpha_{m_{n_v}^v}|_{X_{p_v}},\ldots{} \rangle^{X_{p_v},tw,\star}_{g_v,n,\beta_v}
\end{split}
\end{align}

A few explanations are in order:
\begin{itemize}
\item We sum over all decorated graphs with $g_\Gamma=g_0$ and $\beta_\Gamma=\beta_0$. $\Gamma_0$ is the decorated graph such that $V(\Gamma)$ consists of a single element $v_0$, $E(\Gamma)=\emptyset$, and $p_{v_0}=\infty$. 
\item We adopt the obvious convention that when $X_{p_v}=X_0$, the invariant is twisted by $V$ (with fiberwise $\C^*$ action of character $1$ for any sub-representation), and when $X_{p_v}=X_\infty$, the invariant is twisted by $\sO(1)$ (with fiberwise $\C^*$ action of character $-1$). In other words, the $\star$ symbol in the superscript at the end of the last line needs to be replaced by either $V$ or $\sO(1)$ depending on the situation.
\item Each $i_{[e]}$ in $\{i_{[e]}\}_{[e]\in \overline E(\Gamma)-\overline E^{tail}(\Gamma)}$ determines an element $T_{i_{[e]}}$ in the basis $\{T_i\}$. As suggested by the notation, they are indexed by $\overline E(\Gamma)-\overline E^{tail}(\Gamma)$. 

\item The last $\ldots{}$ sign in the twisted invariant should be inserted as follows. For any $[e]\in \overline E(\Gamma)$, we have $v_+, v_-\in V^{end}_{[e]}$ as before. Some insertions are inserted into $\langle \ldots{} \rangle_{g_{v_{\pm}},val({v_{\pm}}),\beta_{v_{\pm}}}^{X_{p_{v_{\pm}}},tw,*}$ which will be specified according to the following cases: (recall fiber integrals are treated differently. Therefore,at least one of $v_\pm$ has nontrivial degree.)
\begin{enumerate}
\item Suppose one of $v_+$ and $v_-$ is in $V^1(\Gamma)$. Say $v_-\in V^1(\Gamma)$. Then an insertion 
\[
(\iota_+)_* \left( \displaystyle\frac{\delta_{v_-}(\lambda-H)}{k_{e_-}Vert(v_-)} \cdot \frac{Edge(\Gamma,[e]) }{\delta_{v_+}(\lambda-H)/k_{e_+}-\psi} \right)
\] 
is inserted into the summand $\langle \ldots{} \rangle_{g_{v_+},val({v_+}),\beta_{v_+}}^{p_{v_+},tw,*}$. We take $\psi$ to be a formal variable under the Gysin push-forward $(\iota_+)_*$, and then it is evaluated as the $\psi$-class in the corresponding Gromov--Witten invariant. The same convention is used in the rest of the section.

\item Suppose one of $v_+$ and $v_-$ is in $V^{1,1}(\Gamma)$. Say $v_-\in V^{1,1}(\Gamma)$. Then an insertion 
\[
(\iota_+)_* \left(\dfrac{1}{Vert(v_-)} \cdot \dfrac{Edge(\Gamma,[e])}{\delta_{v_+}(\lambda-H)/k_{e_+}-\psi} \cdot \psi^{k_{m_1^{v_-}}} (\iota_-)^* \left( \tilde\alpha_{m_1^{v_-}}|X_{p_{v_-}} \right) \right)
\] 
is inserted into the summand $\langle \ldots{} \rangle_{g_{v_+},val({v_+}),\beta_{v_+}}^{p_{v_+},tw,*}$.

\item If none of the above applies, $v_+, v_-$ must all be stable vertices. In this case, an insertion 
\[
(\iota_+)_* \left( \displaystyle\frac{Edge(\Gamma,[e])T_{i_{[e]}}}{\delta_{v_+}(\lambda-H)/k_{e_+}-\psi} \right)
\]
should be placed in the summand $\langle \ldots{} \rangle_{g_{v_+},val({v_+}),\beta_{v_+}}^{p_{v_+},tw,*}$. In the meantime, an insertion 
\[
(\iota_-)_* \left( \displaystyle\frac{T^{i_{[e]}}}{\delta_{v_-}(\lambda-H)/k_{e_-}-\psi} \right)
\] 
should be placed in the $\langle \ldots{} \rangle_{g_{v_-},val({v_-}),\beta_{v_-}}^{p_{v_-},tw,*}$ summand. 

\end{enumerate}


\end{itemize}

In all the above expressions, the edge contribution $Edge(\Gamma,[e])$ can be computed as
\[
	Edge[\Gamma,[e]]=\displaystyle\frac{\displaystyle\prod\limits_{v\in V^{2}_{[e]}\bigcup V^{end}_{[e]}} Vert(v) }{ \displaystyle\prod\limits_{v\in V^{2}_{[e]}}\left( \left(\sum\limits_{e'\in E_v}\displaystyle\frac{1}{k_{e'}}\right) \delta_{v}(\lambda-H)\right) \prod\limits_{e\in [e]}\prod\limits_{m=1}^{k_e} \left(\dfrac{m}{k_e}(H-\lambda){\bf c}_V\left(H+\dfrac{m}{k_e}(\lambda-H)\right) \right) }.
\]
We also left out some special cases (for example, when both $v_+$, $v_-$ lie in $V^1(\Gamma)$ or $V^{1,1}(\Gamma)$). We leave the details to the readers as they are easy to figure out.

\begin{lem}\label{lemma:vanish}
In Equation (\ref{eqn:loc}), take $\alpha_i, k_i$ such that
\[
	\sum\limits_{i=1}^n k_i + \sum\limits_{i=1}^n deg(\alpha_i)=dim[\bM_{g_0,n}(\Pp(V),\beta_0)]^{vir}.
\]
If the curve class $\beta_0\in N_1(X)$ satisfies $\beta_0\in \NE(X)$ and $(\beta_0,\sO(1)) > g_0-1$, the left-hand side of Equation (\ref{eqn:loc}) is zero.
\end{lem}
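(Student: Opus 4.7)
The plan is a pure dimension count. The left-hand side of (\ref{eqn:loc}) is a $\C^*$-equivariant integral with values in $H^*_{\C^*}(\text{pt}) = \C[\lambda]$, which is concentrated in non-negative degree; so it will suffice to show that the complex degree of the integrand $\prod_i\psi^{k_i}\tilde\alpha_i$ is \emph{strictly less} than $\dim[\bM_{g_0,n}(X,\beta_0)]^{vir}$.

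First I would verify that the lifting procedure of Section \ref{section:lifting} preserves complex degrees. By construction $\tilde\alpha_i = h^{e_i}\cup \pi_X^*\overline\alpha_i$ with $h$ of complex degree one, so $\deg\tilde\alpha_i = e_i + \deg\overline\alpha_i = \deg\alpha_i$. Combined with the numerical hypothesis on the $\alpha_i$ and $k_i$, the total complex degree of $\prod_i\psi^{k_i}\tilde\alpha_i$ equals $\dim[\bM_{g_0,n}(\Pp(V),\beta_0)]^{vir}$.

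Next I would compare the two virtual dimensions. Writing $d=\dim\Pp(V)$ so that $\dim X = d+1$, and recalling from Section \ref{section:localization} that $N_{X_\infty/X}\cong \sO_{\Pp(V)}(1)$, adjunction gives $c_1(T_X)|_{X_\infty} - c_1(T_{X_\infty}) = H$. The standard virtual dimension formula then yields
\[
\dim[\bM_{g_0,n}(X,\beta_0)]^{vir} - \dim[\bM_{g_0,n}(\Pp(V),\beta_0)]^{vir} = (1-g_0) + (\beta_0,\sO(1)).
\]
Under the hypothesis $(\beta_0,\sO(1)) > g_0-1$ and the integrality of intersection pairings, this difference is at least $1$. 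Hence the integrand has complex degree strictly less than the virtual dimension, so the equivariant pushforward to a point lands in a negative-degree piece of $\C[\lambda]$ and therefore vanishes.

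The argument is entirely formal dimension-counting; the only things worth double-checking are the sign in the adjunction computation and the degree-preservation of the lift, both of which are routine. There is no substantial obstacle here, which is by design: the whole point of the master-space construction of Section \ref{section:setup} is precisely to engineer an invariant on $X$ that vanishes for dimensional reasons, so that the localization identity (\ref{eqn:loc}) becomes a useful relation between invariants on $X_0\cong S$ and $X_\infty \cong \Pp(V)$.
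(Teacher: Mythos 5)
Your proof is correct and follows essentially the same dimension-counting argument as the paper: lift degrees are preserved, the two virtual dimensions differ by $(1-g_0)+(\beta_0,\sO(1))$, and the hypothesis makes this gap strictly positive, so the equivariant integral vanishes since $H^*_{\C^*}(\mathrm{pt})$ has no negative-degree part. The only difference is cosmetic — you spell out the adjunction computation behind the virtual-dimension comparison, which the paper simply asserts.
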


\begin{proof}
It can be proven by dimension counting. First of all, recall
\[
	\langle \psi^{k_1}\tilde\alpha_1,\ldots{},\psi^{k_n}\tilde\alpha_n \rangle^{X}_{g_0,n,\beta_0}=\displaystyle\int_{[\bM_{g_0,n}(X,\beta_0)]^{vir}} \prod\limits_{i=1}^n \psi_i^{k_i} ev_{i}^*\alpha_i,
\]
where $[\bM_{g_0,n}(X,\beta_0)]^{vir}$ and the integration should be understood equivariantly. Notice
\begin{equation}
\begin{split}
\sum\limits_{i=1}^n k_i + \sum\limits_{i=1}^n deg(\tilde\alpha_i) &=  \sum\limits_{i=1}^n k_i + \sum\limits_{i=1}^n deg(\alpha_i) \\
&= dim[\bM_{g_0,n}(\Pp(V),\beta_0)]^{vir} \\
&= dim[\bM_{g_0,n}(X,\beta_0)]^{vir} - (1-g_0+ (\beta_0,\sO(1))) \\
&< dim[\bM_{g_0,n}(X,\beta_0)]^{vir}.
\end{split}
\end{equation}
Here we used the assumption that $( \beta_0 , \sO(1) ) > g_0-1$. Integrating a lower degree equivariant cohomology class on a higher degree equivariant homology class results in $0$, since there is no negative degree element in $H^*_{\C^*}(\{pt\})$.
\end{proof}

Recall that if $\sum\limits_{i=1}^n k_i + \sum\limits_{i=1}^n deg(\alpha_i)=dim[\bM_{g_0,n}(\Pp(V),\beta_0)]^{vir}$, by Lemma \ref{lemma:untw}, we have
\[
	\langle\psi^{k_1}\alpha_1,\ldots{},\psi^{k_n}\alpha_n\rangle_{g,n,\beta}^{X,tw,\sO(1)}=\displaystyle\frac{\langle\psi^{k_1}\alpha_1,\ldots{},\psi^{k_n}\alpha_n\rangle_{g,n,\beta}^{X}}{(-\lambda)^r}.
\]
The $(-\lambda)$ on the denominator is due to the induced action of $\C^*$ on $\sO(-1)$ that has weight $-1$ fiberwise. Therefore, under this degree condition of insertions, the leading term of the right-hand side of Equation (\ref{eqn:loc}) is in fact an untwisted invariant.

Back to the goal of this section. Note that in the last line of Equation (\ref{eqn:loc}), we sum over $\Gamma$ with $\Gamma\neq \Gamma_0$. $E(\Gamma)$ has to be nonempty. Notice that the edge component must have nontrivial numerical class. As a result, for any $v\in V(\Gamma)$, we have $\beta_v<\beta_0$. Since $g_\Gamma=g_0$, we have $g_v\leq g_0$ for all $v\in V(\Gamma)$. These observation together with Lemma \ref{lemma:vanish}, our induction is achieved by applying Equation (\ref{eqn:loc}) under the induction hypothesis and the degree condition of insertions in Lemma \ref{lemma:vanish}. In other words, Equation (\ref{eqn:loc}) expresses $\langle \psi^{k_1}\alpha_i ,\ldots{}, \psi^{k_n}\alpha_n \rangle_{g_0,n,\beta_0}^{\Pp(V)}$ in terms of Gromov--Witten invariants of the form $\langle \ldots{} \rangle_{g,n',\beta}^{\Pp(V)}$ with $\beta<\beta_0$ and $g\leq g_0$, plus the twisted invariants of $S$.

\begin{ex}
If $S$ is a point, it provides a way to compute $g=0$ Gromov--Witten invariants of $\Pp^n$ from the ones of a point. In this case, any curve class is a fiber class in the sense of section \ref{section:fiber}. Note that our localization still works for fiber integrals when $g=0$. 

Let us compute $\langle \psi^{2n-1} \rangle_{0,1,1}^{\Pp^n}$. If we use the hypergeometric $J$-function in the mirror theorem, we know the answer is
\[
\langle \psi^{2n-1} \rangle_{0,1,1}^{\Pp^n}=(-1)^n{{2n}\choose{n}}.
\]
Now we apply master space technique and consider $\langle \psi^{2n-1} \rangle_{0,1,1}^{\Pp^{n+1}}$, which is $0$ due to the virtual dimension. Let $\C^*$ act on $\Pp^{n+1}$ by sending $[x_0:\ldots{}:x_{n+1}]$ to $[\lambda x_0:\ldots{}:\lambda x_n:x_{n+1}]$. We have
\begin{align}
\begin{split}
0=\langle \psi^{2n-1} \rangle_{0,1,1}^{\Pp^{n+1}}&=\langle \psi^{2n-1} \rangle_{0,1,1}^{\Pp^{n}}\lambda^{-2}+\displaystyle\int_{\Pp^n}\dfrac{(\lambda-H)^{2n-1}(\lambda-H)}{\lambda^{n+1}(H-\lambda)}+\int_{\Pp^n}\dfrac{(-\lambda+H)^{2n-1}(-\lambda+H)}{\lambda^{n+1}(H-\lambda)}\\
&=\langle \psi^{2n-1} \rangle_{0,1,1}^{\Pp^{n}}\lambda^{-2}-2(-1)^{n}{2n-1\choose n} \lambda^{-2}.
\end{split}
\end{align}
Notice $\displaystyle 2{2n-1\choose n}={2n\choose n}$. We are done.
\end{ex}

\section{Proof of Theorem \ref{main}}\label{section:proof}

Before the proof, let's state a few lemmas.
\begin{lem}\label{lemma:tw}
Let $S$ be a smooth projective variety. Let $V_1, V_2$ be vector bundles over $S$ such that $c(V_1)=c(V_2)$. Let $\C^*$ act on $V_1, V_2$ by scaling. We have the equality of twisted invariants
\[\langle\psi^{k_1}\alpha_1,\ldots{},\psi^{k_n}\alpha_n\rangle_{g,n,\beta}^{S,tw,V_1}=\langle\psi^{k_1}\alpha_1,\ldots{},\psi^{k_n}\alpha_n\rangle_{g,n,\beta}^{S,tw,V_2} \in \C[\lambda,\lambda^{-1}].
\]
\end{lem}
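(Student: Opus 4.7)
The plan is to prove the stronger pointwise equality
\[
e_{\C^*}((V_1)_{g,n,\beta}) = e_{\C^*}((V_2)_{g,n,\beta}) \in H^*(\bM_{g,n}(S,\beta))\otimes_{\C}\C[\lambda,\lambda^{-1}];
\]
the lemma then follows at once, because every other factor appearing in the twisted integrand---the virtual class, the $\psi$-classes, and the evaluation pullbacks of the $\alpha_i$---is intrinsic to $(S,\beta)$ and insensitive to the choice of $V_a$.

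First I would reduce this to a statement about virtual total Chern classes. Since $\C^*$ scales each $E^j_{g,n,\beta}$ with weight one, one has $e_{\C^*}(E^j_{g,n,\beta}) = \lambda^{r_j}\sum_i c_i(E^j_{g,n,\beta})\lambda^{-i}$, and therefore
\[
e_{\C^*}((V_a)_{g,n,\beta}) = \lambda^{r_0-r_1}\cdot\frac{\sum_j c_j((V_a)^0_{g,n,\beta})\lambda^{-j}}{\sum_j c_j((V_a)^1_{g,n,\beta})\lambda^{-j}}
\]
for $a=1,2$; the denominator is a unit in $H^*(\bM_{g,n}(S,\beta))[\lambda,\lambda^{-1}]$ because its positive-cohomological-degree part is nilpotent, exactly as in the excerpt. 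By multiplicativity of the total Chern class in short exact sequences, the right-hand side depends on $V_a$ only through the virtual rank $r_0-r_1$ and the total Chern class of the virtual K-theory class $[R\pi_{*}\,ev_{n+1}^{*}V_a]$, where $\pi:=ft_{n+1}$ is the forgetful/universal curve morphism.

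Next I would invoke Grothendieck--Riemann--Roch applied to $\pi$:
\[
\mathrm{ch}\bigl(R\pi_{*}\,ev_{n+1}^{*}V_a\bigr) = \pi_{*}\bigl(ev_{n+1}^{*}\mathrm{ch}(V_a)\cdot \mathrm{td}(T_\pi)\bigr),
\]
where $T_\pi$ is the virtual relative tangent complex of the universal curve and depends only on the moduli space itself. Because $\mathrm{ch}$ and $c$ determine one another through universal polynomials with rational coefficients, the hypothesis $c(V_1)=c(V_2)$ immediately gives $\mathrm{ch}(V_1)=\mathrm{ch}(V_2)$, and hence $\mathrm{ch}(R\pi_{*}\,ev_{n+1}^{*}V_1)=\mathrm{ch}(R\pi_{*}\,ev_{n+1}^{*}V_2)$. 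This forces equality of the virtual total Chern classes and of the virtual ranks, which together with the previous paragraph yields the desired equality of equivariant Euler classes and completes the proof.

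The main technical subtlety will be that $\pi=ft_{n+1}$ is a morphism between Deligne--Mumford stacks whose fibers are nodal rather than smooth, so one must apply GRR in this generality---available, for example, via Gillet's GRR for proper morphisms of DM stacks, or by pulling back along a suitable resolution of the universal curve. Since $\pi$ and $T_\pi$ are intrinsic to $\bM_{g,n}(S,\beta)$ and do not see the twisting bundle, this subtlety acts identically on $V_1$ and $V_2$ and poses no obstruction to the argument.
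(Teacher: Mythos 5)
Your proposal is correct and follows essentially the same route as the paper: reduce the equality of twisted invariants to the equality of the equivariant Euler classes $e_{\C^*}((V_a)_{g,n,\beta})$, observe that the scaling action makes this depend only on the virtual rank and virtual total Chern class of $R(ft_{n+1})_*\,ev_{n+1}^*V_a$, and then apply Grothendieck--Riemann--Roch along the universal curve to express $\mathrm{ch}(R(ft_{n+1})_*\,ev_{n+1}^*V_a)$ in terms of $\mathrm{ch}(V_a)$ and intrinsic Todd-class data of the moduli stack. Your write-up is somewhat more explicit than the paper's (the paper writes the Euler class polynomial in a slightly abused form that strictly only applies to a genuine bundle, while you unravel the ratio of $e_{\C^*}(E^0)/e_{\C^*}(E^1)$ carefully, and you flag the DM-stack version of GRR), but the key steps and the reliance on $c\leftrightarrow\mathrm{ch}$ being mutually determining are identical.
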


It has a similar appearance as the Theorem \ref{main}. But it's all about invariants on $S$ twisted by two different vector bundles. 

\begin{proof}
Since the $\C^*$ acts on $V_i$ by scaling, we have
\[
	e_{\C^*}((V_i)_{g,n,\beta})=\lambda^r+c_1((V_i)_{g,n,\beta})\lambda^{r-1}+\cdots{}+c_r((V_i)_{g,n,\beta}).
\]
It suffices to prove that $c((V_i)_{g,n,\beta})$ depends only on the total Chern class $c(V_i)$ for $i=1,2$. But this can be seen using the Grothendieck Riemann-Roch formula
\[
ch((V_i)_{g,n,\beta})=(ft_{n+1})_*\left( ch(ev_{n+1}^*V_i) \cdot Td^\vee(\Omega_{ft_{n+1}})\right).
\]
One can get more precise formulas by following the analysis in \cite[Appendix 1]{cg} or \cite{FP}. Since $c(V_1)=c(V_2)$, we have $ch(V_1)=ch(V_2)$. And we readily have $ch(ev_{n+1}^*V_1)=ch(ev_{n+1}^*V_2)$ by functoriality. The Todd class is determined by the moduli stacks $\bM_{0,n}(X,\beta)$ and its universal family, which is independent of $V_1$ and $V_2$.
\end{proof}

\begin{lem}\label{lemma:tw2}
Let $S$ be a smooth projective variety. Let $V$ be a vector bundle with $\C^*$ acting by scaling on fibers. Then
\[
	\langle \psi^{k_1}\alpha_1,\ldots{},\psi^{k_n}\alpha_n \rangle_{g,n,\beta}^{S,tw,V}
\]
can be determined by the Chern classes $c(V)$ and untwisted invariants of the form
\[
	\langle \psi^{k'_1}\alpha'_1,\ldots{},\psi^{k'_{n'}}\alpha'_{n'} \rangle_{g',n',\beta'}^{S},
\]
where $g'\leq g$, $\beta'\leq \beta$. 
\end{lem}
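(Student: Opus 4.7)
The approach is to expand the equivariant Euler class as a Laurent polynomial in $\lambda$, then apply Grothendieck--Riemann--Roch to the obstruction complex $V^\bullet_{g,n,\beta}$, and finally reduce the resulting tautological integrals against $[\bM_{g,n}(S,\beta)]^{vir}$ to untwisted descendent Gromov--Witten invariants of $S$.

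First, since $\C^*$ scales the fibers of $V$, for $i=0,1$ one has $e_{\C^*}(V^i_{g,n,\beta})=\sum_{j=0}^{r_i} c_j(V^i_{g,n,\beta})\lambda^{r_i-j}$, so $1/e_{\C^*}(V_{g,n,\beta})$ is a Laurent polynomial in $\lambda$ with coefficients that are universal polynomials in the K-theoretic Chern classes $c_j(V_{g,n,\beta})$ (using nilpotency of positive-degree classes on $\bM_{g,n}(S,\beta)$ to invert $e_{\C^*}(V^0_{g,n,\beta})$). By Grothendieck--Riemann--Roch,
\[
ch(V_{g,n,\beta}) = (ft_{n+1})_*\bigl(ev_{n+1}^*ch(V)\cdot Td^\vee(\Omega_{ft_{n+1}})\bigr),
\]
and the universal class $Td^\vee(\Omega_{ft_{n+1}})$ on $\bM_{g,n+1}(S,\beta)$ expands as a universal polynomial in the $\psi$-class at the new marked point, the $\psi$-classes at the nodes, and the boundary classes of nodal strata. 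Thus each $\lambda$-coefficient of the twisted invariant becomes a finite sum of integrals against $[\bM_{g,n}(S,\beta)]^{vir}$ of tautological classes, multiplied by pullbacks of $c_j(V)$ from $S$ and by the original descendent insertions $\psi_i^{k_i}ev_i^*\alpha_i$.

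It remains to identify these integrals as untwisted descendents of $S$. The $\psi$-classes directly contribute descendents; $\kappa$-type classes produced by pushforward through $ft_{n+1}$ are re-absorbed by adding a marked point, which preserves both $g$ and $\beta$; boundary classes decompose, via the splitting axiom for the virtual class, into products of invariants on pieces with either $g_1+g_2=g$, $\beta_1+\beta_2=\beta$ (separating node) or with genus reduced to $g-1$ (non-separating node). In every case the discrete data satisfies $g'\leq g$ and $\beta'\leq\beta$, while $ev_{n+1}^*c_j(V)$ enters as an ordinary cohomology insertion on $S$. The main difficulty is organizational --- checking that no step of this cascade enlarges genus or curve class, which follows since $ft_{n+1}$ preserves both invariants and boundary splitting weakly decreases them. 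A more conceptual alternative is to invoke the Coates--Givental quantum Riemann--Roch theorem of \cite{cg}, which converts the untwisted descendent potential of $S$ into the twisted one via a universal symplectic operator depending only on $ch(V)$, equivalently on $c(V)$, and which manifestly preserves the genus and curve-class bounds.
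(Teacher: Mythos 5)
Your proposal is correct and takes essentially the same route as the paper, which simply says the lemma follows from Grothendieck--Riemann--Roch (as in the proof of the preceding lemma, which cites the analysis in Coates--Givental and Faber--Pandharipande). Your fleshed-out version --- expanding $Td^\vee(\Omega_{ft_{n+1}})$ into $\psi$-, $\kappa$-, and boundary classes, reabsorbing $\kappa$-classes by adding a marking, and unwinding boundary strata via splitting while tracking that genus and degree only decrease --- is exactly the content the paper elides, and your alternative of invoking the quantum Riemann--Roch theorem of Coates--Givental directly is the cleanest way to see the universal dependence on $c(V)$.
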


Again it can be done by Grothendieck Riemann-Roch formula and we omit the details. 


The proof of \ref{main} proceeds by applying Theorem \ref{recursive} to both $\Pp(V_1)$ and $\Pp(V_2)$. But there is still one issue. Now that the corresponding twisted invariants on $S$ are identified, all the ingredients for the two cases in Theorem \ref{recursive} now agree except the Mori cones $\NE(\Pp(V_1))$ and $\NE(\Pp(V_2))$. In general, the two Mori cones can be different. However, the condition in Lemma \ref{lemma:vanish} is enough for Equation (\ref{eqn:loc}) to provide relations between invariants on projective bundles and twisted invariants. Lemma \ref{lemma:vanish} requires a weaker condition on the curve class $\beta_0$ than being effective in $\Pp(V)$.

Fix an $i\in \{1,2\}$. Recall we identify $N_1(\Pp(V_i))$ with $N_1(\Pp(V_i\oplus \sO))$ via push-forward under inclusion and we have $\NE(\Pp(V_i))\subset \NE(\Pp(V_i\oplus \sO))$ since push-forward preserves effectiveness. Also recall that by Assumption \ref{assumption} in section \ref{main}, $\sO_{\Pp(V_i)}(1)$ is ample on $\Pp(V_i)$. We prove a few lemmas under this assumption.
\begin{lem}
$\sO_{\Pp(V_i\oplus \sO)}(1)$ is nef on $\Pp(V_i\oplus \sO)$.
\end{lem}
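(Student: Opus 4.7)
The plan is to identify the line bundle $\sO_X(1)$, where $X := \Pp(V_i \oplus \sO)$, with the divisorial line bundle $\sO_X(X_\infty)$ associated to $X_\infty = \Pp(V_i) \subset X$, and then to verify nefness curve-by-curve by splitting into two cases according to whether the curve lies inside $X_\infty$.

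First I would exhibit a global section of $\sO_X(1)$ whose vanishing divisor is $X_\infty$: the distinguished trivial summand $\sO \hookrightarrow V_i \oplus \sO$ yields such a section. Fiberwise, after identifying the fiber of $\pi_X: X \to S$ with $\Pp^r = \Pp(\C^r \oplus \C)$, the last coordinate is a linear form vanishing exactly on the hyperplane $\Pp(\C^r)$, which is the intersection of $X_\infty$ with that fiber. Alternatively, one can argue globally: the identification $N_{X_\infty/X} = \sO_{\Pp(V_i)}(1) = \sO_X(1)|_{X_\infty}$ recalled in Section \ref{section:localization}, combined with the Leray--Hirsch-type decomposition $\mathrm{Pic}(X) = \pi_X^* \mathrm{Pic}(S) \oplus \Z \cdot c_1(\sO_X(1))$ and its counterpart on $X_\infty$, forces $\sO_X(X_\infty) \cong \sO_X(1)$.

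Once this identification is in hand, let $C \subset X$ be any irreducible curve. If $C \not\subset X_\infty$, then $(\sO_X(1), C) = (X_\infty, C) \geq 0$, since one is intersecting a curve with an effective divisor not containing it. If instead $C \subset X_\infty$, restriction gives $(\sO_X(1), C) = (\sO_{\Pp(V_i)}(1), C) \geq 0$ (in fact strictly positive), because $\sO_{\Pp(V_i)}(1)$ is ample on $X_\infty = \Pp(V_i)$ by Assumption \ref{assumption}. In either case the pairing is non-negative, so $\sO_X(1)$ is nef.

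The only step with any real content is the identification $\sO_X(X_\infty) \cong \sO_X(1)$; this is a standard fact about the projective compactification $\Pp(E \oplus \sO)$ of a vector bundle $E$, so I do not expect a genuine obstacle.
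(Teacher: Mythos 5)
Your argument is correct and is essentially the same as the paper's: both realize $X_\infty=\Pp(V_i)$ as the zero locus of a section of $\sO_X(1)$ and then split into the two cases $C\subset X_\infty$ (use ampleness of $\sO_{\Pp(V_i)}(1)$ from Assumption \ref{assumption}) and $C\not\subset X_\infty$ (intersect with an effective divisor not containing $C$). The only minor point is that in the paper's ``lines'' convention, the section of $\sO_X(1)$ cutting out $X_\infty$ is induced by the \emph{projection} $V_i\oplus\sO\twoheadrightarrow\sO$ composed with the tautological inclusion $\sO_X(-1)\hookrightarrow\pi_X^*(V_i\oplus\sO)$, rather than by the inclusion $\sO\hookrightarrow V_i\oplus\sO$; this does not affect the substance of the argument.
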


\begin{proof}
$\Pp(V_i)$ can be realized as the zero locus of a section $s$ on the line bundle $\sO_{\Pp(V_i\oplus \sO)}(1)$. Let $C$ be an effective curve on $\Pp(V_i\oplus\sO)$. If $C\subset \Pp(V_i)$, $\sO_{\Pp(V_i\oplus \sO)}(1)$ restricts to an ample line bundle on $C$ by assumption. If otherwise, $s$ does not vanish on the whole $C$. Therefore,$\sO_{\Pp(V_i\oplus \sO)}(1)$ restricts to a line bundle of nonnegative degree as well.
\end{proof}

Recall $V_i \subset \Pp(V_i\oplus \sO)$. Let $\iota_i:S\rightarrow \Pp(V_i\oplus\sO)$ be the inclusion given by the zero section of $V_i$, and $pr_i:\Pp(V_i\oplus\sO)\rightarrow S$ the projection. Let $f_i\in\NE(\Pp(V_i\oplus\sO))$ be the class of degree $1$ curve on the fiber.
\begin{lem}
Any extremal curve class $\beta\in\NE(\Pp(V_i\oplus\sO))$ with $(pr_i)_*\beta\neq 0$ must have $\beta=(\iota_i)_*(pr_i)_*\beta$.
\end{lem}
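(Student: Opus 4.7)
The plan is to decompose any class $\beta \in N_1(\Pp(V_i\oplus\sO))$ into a ``horizontal'' piece supported in the zero section and a ``vertical'' fiber piece, and to argue that as soon as the horizontal piece is nonzero, the presence of a positive vertical component would contradict extremality. More precisely, writing $\zeta = c_1(\sO_{\Pp(V_i\oplus\sO)}(1))$ and using Grothendieck's convention (under which the zero section $\iota_i:S\to \Pp(V_i\oplus\sO)$ corresponds to the subbundle $\sO\subset V_i\oplus\sO$, so that $\iota_i^*\sO_{\Pp(V_i\oplus\sO)}(1) = \sO_S$), one checks $(\iota_i)_*\bar\beta\cdot\zeta = 0$ for every $\bar\beta\in N_1(S)$ while $f_i\cdot\zeta = 1$. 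Setting $\bar\beta = (pr_i)_*\beta$ and $d = \beta\cdot\zeta$, this gives a canonical identity
\[
\beta \;=\; (\iota_i)_*\bar\beta \;+\; d\cdot f_i \qquad \text{in } N_1(\Pp(V_i\oplus\sO)),
\]
since both sides have the same intersection numbers with every divisor class (pullbacks of divisors from $S$, together with $\zeta$, generate all divisors on $X$).

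Next I would verify that each summand on the right lies in $\NE(\Pp(V_i\oplus\sO))$. Effectivity of $\bar\beta$ on $S$ is automatic because $pr_i$ is proper, so $(pr_i)_*$ preserves effectiveness; and since $\iota_i$ is a closed immersion, $(\iota_i)_*$ then preserves effectiveness as well, giving $(\iota_i)_*\bar\beta\in \NE(\Pp(V_i\oplus\sO))$. The class $f_i$ is obviously in the Mori cone, and the coefficient $d = \beta\cdot\zeta$ is nonnegative by the preceding lemma, which says $\sO_{\Pp(V_i\oplus\sO)}(1)$ is nef, paired with the effectivity of $\beta$. Now suppose $\beta$ is extremal with $(pr_i)_*\beta = \bar\beta\neq 0$. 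If $d>0$, then the decomposition displayed above expresses $\beta$ as a sum of two nonzero effective classes whose projections to $N_1(S)$ are $\bar\beta$ and $0$, respectively; in particular they are not proportional, contradicting extremality of $\beta$. Hence $d=0$ and $\beta = (\iota_i)_*(pr_i)_*\beta$, as claimed.

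The only real subtlety is pinning down which convention for $\Pp(V_i\oplus\sO)$ is being used and verifying accordingly that $\iota_i^*\zeta = 0$, so that the decomposition formula has $d = \beta\cdot\zeta$ on the nose; once this is in place the argument is a direct consequence of nefness of $\sO(1)$ (the previous lemma) plus the obvious fact that pushforward of curves under proper morphisms and closed immersions preserves effectiveness. No computation of Chern classes or Gromov--Witten invariants is needed.
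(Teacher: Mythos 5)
Your proof is correct and follows essentially the same route as the paper's. You both write $\beta - (\iota_i)_*(pr_i)_*\beta = k f_i$, pair with $\sO_{\Pp(V_i\oplus\sO)}(1)$ to identify $k$ as $(\beta,\sO(1))$, invoke nefness from the preceding lemma to get $k\geq 0$, and rule out $k>0$ by extremality; you merely spell out a couple of steps the paper leaves implicit, namely that the decomposition is detected by intersection with divisor classes via Leray--Hirsch, and that $(\iota_i)_*(pr_i)_*\beta$ and $f_i$ are both effective and non-proportional (the latter since one has nonzero pushforward to $S$ and the other does not), so the decomposition genuinely violates extremality.
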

\begin{proof}
First of all, $\beta-(\iota_i)_*(pr_i)_*\beta=kf$ for some integer $k$. If $k>0$, $\beta=kf+(\iota_i)_*(pr_i)_*\beta$ contradicts with $\beta$ being extremal. On the other hand, notice $((\iota_i)_*(pr_i)_*\beta , \sO_{\Pp(V_i\oplus\sO)}(1))=0$. We have 
\[
k = (kf , \sO_{\Pp(V_i\oplus\sO)}(1)) = (\beta - (\iota_i)_*(pr_i)_*\beta , \sO_{\Pp(V_i\oplus\sO)}(1)) = (\beta, \sO_{\Pp(V_i\oplus\sO)}(1)) \geq 0
\]
since $\sO_{\Pp(V_i\oplus\sO)}(1)$ is nef. The only case possible is $k=0$.
\end{proof}

Now $i\in \{1,2\}$ is no longer fixed, and we are going to compare the $i=1,2$ cases. Recall $c(V_1)=c(V_2)$. We have the natural identification $\Psi_{V_1\oplus\sO, V_2\oplus\sO}: N_1(\Pp(V_1\oplus\sO))\cong N_1(\Pp(V_2\oplus\sO))$ from section \ref{section:main}.
\begin{lem}
$\Psi_{V_1\oplus\sO, V_2\oplus\sO}(\NE(\Pp(V_1\oplus\sO))) = \NE(\Pp(V_2\oplus\sO))$.
\end{lem}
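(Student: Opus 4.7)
The plan is to use the previous lemma to describe $\NE(\Pp(V_i\oplus\sO))$ explicitly in terms of generators that are visibly transported by $\Psi_{V_1\oplus\sO, V_2\oplus\sO}$. A closed convex cone is spanned by its extremal rays, so it suffices to understand the extremal rays of $\NE(\Pp(V_i\oplus\sO))$ for $i=1,2$ and to check that $\Psi$ carries one spanning set to the other.

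First, I would invoke the preceding lemma: any extremal $\beta\in\NE(\Pp(V_i\oplus\sO))$ with $(pr_i)_*\beta\neq 0$ satisfies $\beta=(\iota_i)_*(pr_i)_*\beta$, and any extremal class with $(pr_i)_*\beta=0$ must be a nonnegative multiple of the fiber class $f_i$. Hence
\[
\NE(\Pp(V_i\oplus\sO)) \;=\; \R_{\geq 0}\langle f_i\rangle \;+\; (\iota_i)_*\NE(S),
\]
where $(\iota_i)_*\NE(S)$ is a cone since $(pr_i)\circ\iota_i=\mathrm{id}_S$ makes $(\iota_i)_*$ injective on $N_1(S)$ with image preserved under addition.

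Next, I would verify that $\Psi$ carries the two families of generators to the corresponding ones on the $V_2$ side. By Leray--Hirsch, any $D\in H^2(\Pp(V_i\oplus\sO))$ decomposes uniquely as $D=a\,h_i+\pi_i^*\sigma$ with $\sigma\in H^2(S)$, and the defining property of $\FF=\FF_{V_1\oplus\sO,V_2\oplus\sO}$ gives $\FF(h_1)=h_2$ and $\FF\circ\pi_1^*=\pi_2^*$. For the fiber class $f_i$, the intersection numbers $(f_i,h_i)=1$ and $(f_i,\pi_i^*\sigma)=0$ characterize $f_i$ uniquely in $N_1$, so $\Psi(f_1)=f_2$ by the pairing identity $(D,\beta)=(\FF(D),\Psi(\beta))$. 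For $(\iota_1)_*\bar\beta$ with $\bar\beta\in N_1(S)$, restriction to the zero section gives $\iota_i^*h_i=0$ (since $\sO_{\Pp(V_i\oplus\sO)}(1)$ restricts trivially to the zero section $S\subset V_i$, which is the projectivization of the $\sO$ summand) and $\iota_i^*\pi_i^*\sigma=\sigma$, so the projection formula yields
\[
\bigl((\iota_i)_*\bar\beta,\,a\,h_i+\pi_i^*\sigma\bigr) \;=\; (\bar\beta,\sigma).
\]
Thus $(\iota_1)_*\bar\beta$ and $(\iota_2)_*\bar\beta$ pair identically with matching $H^2$ elements under $\FF$, which forces $\Psi((\iota_1)_*\bar\beta)=(\iota_2)_*\bar\beta$.

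Finally I would conclude: $\Psi$ maps the generating set $\{f_1\}\cup(\iota_1)_*\NE(S)$ of $\NE(\Pp(V_1\oplus\sO))$ bijectively onto the generating set $\{f_2\}\cup(\iota_2)_*\NE(S)$ of $\NE(\Pp(V_2\oplus\sO))$, so the cones coincide. The only potentially delicate point is the explicit computation $\iota_i^*h_i=0$, but this is immediate from the geometric identification of the zero section with the projectivization of the trivial summand. Everything else is formal manipulation of the intersection pairing, so I do not expect a serious obstacle.
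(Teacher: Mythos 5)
Your proof is correct and follows the same route as the paper's, which is extremely terse (it states $\Psi(f_1)=f_2$ and $\Psi((\iota_1)_*\bar\beta)=(\iota_2)_*\bar\beta$ without justification and concludes). You have simply supplied the supporting details: the explicit generating-set description of the Mori cone deduced from the preceding lemma, the computation $\iota_i^*h_i=0$ (correct, since the zero section is the projectivization of the $\sO$ summand, on which the tautological line bundle is trivial), and the pairing argument via the decomposition $D=a\,h_i+\pi_i^*\sigma$ of $H^2$. No gaps.
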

\begin{proof}
Certainly $\Psi_{V_1\oplus\sO, V_2\oplus\sO}(f_1)=f_2$. One can also check that given a $\overline\beta\in\NE(S)$,
\[
	\Psi_{V_1\oplus\sO, V_2\oplus\sO}((\iota_1)_*\overline\beta) = (\iota_2)_*\overline\beta.
\]
As a result, extremal rays of the corresponding Mori cones are identified under $\Psi_{V_1\oplus\sO, V_2\oplus\sO}$.
\end{proof}

Under the isomorphism $\Psi_{V_1\oplus\sO, V_2\oplus\sO}$, we will not distinguish Mori cones for $\Pp(V_1\oplus\sO)$ and $\Pp(V_2\oplus\sO)$. We are ready to apply the computation from section \ref{section:recursion} to $V=V_1$ and $V=V_2$ separately and compare the invariants. Before running the induction in section \ref{section:recursion}, the $L_g$ are chosen so that $(\beta , \sO_{\Pp(V_i)}(1)+L_g) > \text{max}\{g-1,0\}$ for both $i=1,2$. Fix $g_0\in\Z_{\geq 0}$ and $\beta_0\in\NE(\Pp(V_1))\bigcup\NE(\Pp(V_2))$. Since Lemma \ref{lemma:vanish} holds under this condition, Equation (\ref{eqn:loc}) can be applied to both $V=V_1$ and $V=V_2$ cases. Assume that invariants $\langle \ldots{} \rangle_{g,n',\beta}^{\Pp(V_i)}$ with $g\leq g_0$ and $\beta<\beta_0$ are identified as in Theorem \ref{main} for $i=1,2$. Now the lower order terms (the graph sums in Equation \eqref{eqn:loc} starting with $\sum\limits_{\Gamma\neq\Gamma_0}\displaystyle\frac{1}{Aut(\Gamma)}\cdots{}$) are identified by induction and Lemma \ref{lemma:tw}, so are the leading terms. Thus, Theorem \ref{main} is proven by induction.

\section{An application to blow-up formula}\label{section:appl}
Theorem \ref{main} can be applied to blow-ups at smooth centers to imply the following
\begin{thm}\label{blowup}
Let $Z\subset Y$ be inclusions of smooth projective varieties and $N_{Z/Y}$ be the normal bundle of $Z$. Let $\tilde Y$ be the blow-up of $Y$ at $Z$ and $E$ be the exceptional divisor. The absolute Gromov--Witten invariants of $\tilde Y$ can be determined by the absolute Gromov--Witten invariants of $Y$ and $Z$, plus the following topological data:
\begin{enumerate}
\item The cohomology rings $H^*(Y)$, $H^*(Z)$ and their pull-back maps under inclusion.
\item The Chern classes $c_i(N_{Z/Y})\in H^*(Y)$.
\end{enumerate}
\end{thm}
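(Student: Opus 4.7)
The plan is to combine Theorem \ref{main} with the degeneration-theoretic reconstruction carried out in \cite{HLR}. Degenerating $Y$ to the normal cone of $Z$ produces a one-parameter family whose special fiber is $\tilde Y \cup_E P$, where $P := \Pp_Z(N_{Z/Y}\oplus\sO)$ and the common divisor $E = \Pp_Z(N_{Z/Y})$ is the exceptional divisor of $\tilde Y$. The degeneration formula writes absolute Gromov--Witten invariants of $Y$ as a sum over splittings of relative invariants of $(\tilde Y,E)$ paired against relative invariants of $(P,E)$, with the splitting matrix determined by the cohomology of $E$ and the pull-back data of $Z \hookrightarrow Y$. The core technical result of \cite{HLR} is that this system can be inverted to express the relative (and hence, via a further normal-cone degeneration of $\tilde Y$ along $E$, the absolute) Gromov--Witten theory of $\tilde Y$ in terms of the Gromov--Witten theory of $Y$ and the Gromov--Witten theory of the pair $(P,E)$, the cohomology rings $H^*(Y), H^*(Z), H^*(E)$, and the natural pull-back morphisms.

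Given this reconstruction, it remains only to show that the Gromov--Witten theory of $(P,E)$ is determined by the Gromov--Witten theory of $Z$, the Chern classes $c_i(N_{Z/Y})$, and the cohomological data listed in the theorem. The absolute Gromov--Witten theory of $P$, and likewise of $E$ (itself a projective bundle over $Z$ of one lower rank), is handled directly by Theorem \ref{main}. For the relative invariants appearing in \cite{HLR}, one performs one more degeneration of $P$ to the normal cone of $E$; the second component of this degeneration is the projective bundle $\Pp_E(\sO_E(-1)\oplus\sO)\to E$, whose normal bundle data is itself controlled by $c_i(N_{Z/Y})$ through the Euler sequence on $E$. Iterating the degeneration formula finitely many times reduces every appearance of relative theory to absolute Gromov--Witten invariants of projective bundles over $Z$ or $E$, all of which Theorem \ref{main} expresses solely in terms of Chern classes.

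The main obstacle is not conceptual but a matter of careful bookkeeping: one must verify that the isomorphisms $\FF$ and $\Psi$ of Theorem \ref{main} intertwine every auxiliary structure that appears in the degeneration formulas---Poincar\'e-dual bases and diagonal classes on $E$, relative canonical bundles, the hyperplane class $\sO_P(1)$, and the $\sO_E(-1)$ twist appearing in the iterated step. The splitting-principle computation carried out in Lemma \ref{lemma:curve} for $c_1(K_{X_i})$ is the prototype, and parallel manipulations handle each auxiliary bundle that arises. Once this identification of data is in place, Theorem \ref{blowup} follows by substituting Theorem \ref{main} into every step of the \cite{HLR} reconstruction algorithm.
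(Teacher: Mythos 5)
Your high-level outline—normal-cone degeneration, the \cite{HLR} reconstruction, and substituting Theorem \ref{main} to eliminate ``absolute GW of $\Pp(N_{Z/Y}\oplus\sO)$'' from the input list—does match the paper's strategy. But the proposal treats \cite{HLR} as a black box giving the \emph{entire} relative Gromov--Witten theory of $(\tilde Y,E)$, and that is exactly where the nontrivial work lies. The result cited from \cite{HLR} (their Theorem 5.15) only determines relative invariants $\langle p^*\sigma_1,\ldots,p^*\sigma_n\,|\,\mu\rangle^{\tilde Y,E}_{g,\beta}$ whose interior insertions are pull-backs along the contraction $p:\tilde Y\to Y$. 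When you degenerate $\tilde Y$ to the normal cone of $E$ (the space $\X=Bl_{E\times\{0\}}(\tilde Y\times\A^1)$), an arbitrary insertion $\bar\alpha\in H^*(\tilde Y)$ must be lifted to $\X$ so that its restriction to the $\tilde Y$ component lands in $p^*H^*(Y)$; this is not automatic. The paper's proof hinges on the observation that the lift is ambiguous up to classes Poincar\'e-dual to cycles supported on $E$, so the $\tilde Y$-side restriction can be corrected to $\bar\alpha+(\iota_E)_!\sigma$ (with a compensating $-h\pi^*\sigma$ moved to the $\Pp_E(\sO_E(-1)\oplus\sO)$ component), together with the fact that $\bar\alpha-p^*p_!\bar\alpha=(\iota_E)_!\sigma'$ for some $\sigma'\in H^*(E)$. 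Your proposal never addresses this insertion restriction, so the step ``relative $\Rightarrow$ absolute for $\tilde Y$'' is a genuine gap as written.

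A second, smaller issue: you propose to obtain the relative theory of $(P,E)$ by degenerating $P$ to the normal cone of $E$ and ``iterating finitely many times.'' This does not obviously terminate: degenerating a pair $(X,D)$ to the normal cone of $D$ produces on the bubble $\Pp_D(N_{D/X}\oplus\sO)$ a relative invariant of exactly the same type, so the iteration does not reduce the problem to absolute invariants. The paper instead invokes \cite{MP} directly, which proves (by rigidification/rubber arguments, not by iterating the degeneration formula) that absolute invariants of $\Pp(V\oplus\sO)$ determine relative invariants of $(\Pp(V\oplus\sO),\Pp(V))$. Finally, the bookkeeping worry in your last paragraph about $\FF$ and $\Psi$ intertwining auxiliary structures is misplaced here: Theorem \ref{main} is used only once, to produce the absolute invariants of $P$ from those of $Z$ and $c(N_{Z/Y})$; no comparison of two different normal bundles occurs, so no intertwining needs to be checked.
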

In fact \cite[Theorem 5.15]{HLR} plus degeneration formula already implies a similar determination of Gromov--Witten invariants that requires some additional information. To be precise, \cite[Theorem 5.15]{HLR} and degeneration formula implies
\begin{thm}\label{knownblowup}
Under the same set-up of Theorem \ref{blowup}. The absolute Gromov--Witten invariants of $\tilde Y$ can be determined by the absolute Gromov--Witten invariants of $Y$ and $Z$, plus the following data:
\begin{enumerate}
\item The cohomology rings $H^*(Y)$, $H^*(Z)$ and their pull-back maps under inclusion.
\item The Chern classes $c_i(N_{Z/Y})\in H^*(Y)$.
\item The absolute Gromov--Witten invariants of $\Pp(N_{Z/Y}\oplus \sO)$
\end{enumerate}
\end{thm}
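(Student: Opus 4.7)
The plan is to combine Jun Li's degeneration formula with the relative-to-absolute conversion of [HLR, Theorem 5.15]. I would first set up the standard deformation to the normal cone: let $W = Bl_{Z\times\{0\}}(Y\times\A^1)\to\A^1$. The generic fiber is $Y$, while the central fiber is $\tilde Y \cup_E P$, where $P := \Pp(N_{Z/Y}\oplus\sO_Z)$ and the two components are glued along $E = \Pp(N_{Z/Y})$, sitting inside $\tilde Y$ as the exceptional divisor and inside $P$ as the divisor at infinity. The family, the gluing divisor, and the conormal line bundles $\sO_E(E)$ on each side are all determined by items (1)--(2), in particular by the Chern classes of $N_{Z/Y}$.

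Next, I would apply Jun Li's degeneration formula. For each set of absolute insertions $\alpha_1,\ldots,\alpha_n \in H^*(Y)$ and each genus and curve class on $Y$, it produces an identity of the schematic form
\[
\langle\alpha_1,\ldots,\alpha_n\rangle^Y_{g,n,\beta} \;=\; \sum \langle\cdots\mid\mu\rangle^{(\tilde Y, E)}_{\mathrm{rel}}\cdot\langle\cdots\mid\mu\rangle^{(P,\Pp(N))}_{\mathrm{rel}},
\]
summed over admissible decompositions of the discrete data and contact profiles $\mu$ along $E$. The combinatorial factors (the matrix of the splitting via a basis of $H^*(E)$, the Hodge/gluing contributions, etc.) depend only on items (1)--(2). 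Invoking [HLR, Theorem 5.15] on the smooth-divisor pair $(P, \Pp(N))$ converts its relative invariants into absolute GW of $P$ (given by hypothesis (3)) and of $\Pp(N)$, plus the Chern classes of $\sO_{\Pp(N)}(1)$. Since the left-hand side is known from the absolute GW of $Y$, a triangular induction ordered by contact weight with $E$ allows one to solve the resulting linear system for the relative invariants of $(\tilde Y, E)$.

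Finally, I would invoke [HLR, Theorem 5.15] a second time, this time on the pair $(\tilde Y, E)$, in the direction that extracts absolute GW of $\tilde Y$ from relative GW of $(\tilde Y, E)$ (just determined), absolute GW of $E = \Pp(N_{Z/Y})$, and the Chern classes of $\sO_E(E)$. The absolute GW of the projective bundle $E$ are in turn determined by the GW of $Z$ together with $H^*(Z)$ and the Chern classes of $N_{Z/Y}$ by the classical localization analysis on a split-bundle case (equivalently, a direct application of the machinery available in earlier sections of the paper). Composing the three conversions yields the absolute GW of $\tilde Y$ from items (1)--(3) and the absolute GW of $Y$, $Z$, and $P$.

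The main obstacle is bookkeeping rather than geometry: one must verify that the inductive scheme derived from the degeneration formula is well-founded, meaning that the stratification by contact order along $E$ makes the triangular system genuinely invertible and that the two successive uses of [HLR, Theorem 5.15] do not introduce a circular dependence. This is exactly what [HLR, Theorem 5.15] guarantees once applied separately on each component of the degeneration, so after quoting that theorem as a black box, the remaining work is essentially organizational.
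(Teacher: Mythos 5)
Your route is genuinely different from the paper's, and unfortunately it has a gap that the paper's proof is specifically designed to circumvent.

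The crucial point you overlook is that \cite[Theorem 5.15]{HLR}, as cited in the paper, does \emph{not} produce all relative invariants of $(\tilde Y, E)$: it only produces relative invariants whose non-relative insertions are of the form $p^*\sigma$ for $\sigma\in H^*(Y)$, where $p:\tilde Y\to Y$ is the contraction. It is not a generic relative-to-absolute conversion tool applicable to an arbitrary smooth pair, and it cannot be "invoked on $(P,\Pp(N))$" or "invoked a second time on $(\tilde Y, E)$ in the direction that extracts absolute GW" — those two steps as you describe them are not what the theorem says. (The paper gets relative invariants of $(P,\Pp(N))$ from the absolute ones via \cite{MP}, not via \cite{HLR}.) Because of this restriction on insertions, the last step of your argument — recovering absolute invariants of $\tilde Y$, which a priori requires \emph{all} insertions in $H^*(\tilde Y)$ and not just the ones pulled back from $Y$ — simply does not follow.

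The paper handles exactly this difficulty by degenerating $\tilde Y$ (not $Y$) to the normal cone of $E\subset\tilde Y$, obtaining a central fiber $\tilde Y\cup_E\Pp_E(\sO(-1)\oplus\sO)$. It then exploits the freedom in lifting an absolute insertion $\bar\alpha\in H^*(\tilde Y)$ to a class on the total space of the degeneration: one may add any class of the form $(\iota_E)_!\sigma$ on the $\tilde Y$ side, at the cost of subtracting $h\pi^*\sigma$ on the rubber side. Choosing $\sigma$ so that $\bar\alpha+(\iota_E)_!\sigma=p^*p_!\bar\alpha$ puts the $(\tilde Y, E)$-side insertions back into the pullback class, where \cite[Theorem 5.15]{HLR} applies. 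This lifting trick is the key content of the proof and is entirely absent from your outline. In addition, your ``triangular induction ordered by contact weight'' to invert the degeneration formula for $Y = \tilde Y\cup_E P$ and solve for relative invariants of $(\tilde Y,E)$ is precisely the nontrivial content of \cite{HLR} itself; as stated it is an assertion, not an argument, and granting it wholesale would render the rest of the proof redundant.
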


Apparently, our Theorem \ref{main} adds to this known result by saying the invariants of $\Pp(N_{Z/Y}\oplus \sO)$ can already be determined by the invariants of $Z$ and Chern classes $c_i(N_{Z/Y})$, and thus, the requirement (c) in Theorem \ref{knownblowup} is redundant. The rest of the section is a brief explanation that \cite[Theorem 5.15]{HLR} $+$ degeneration formula implies Theorem \ref{knownblowup}.

First of all, by \cite{MP}, (3) is enough to determine all relative invariants of the pair $( \Pp(N_{Z/Y}\oplus \sO),\Pp(N_{Z/Y}) )$. Let's recall the main theorem in \cite{HLR}. In view of \cite[Definition 5.6]{HLR}, its content can be rephrased as the following.
\begin{thm}
Absolute Gromov--Witten invariants of $Y$ and (1), (2), (3) in Theorem \ref{knownblowup} are enough to determine relative Gromov--Witten invariants of the following form (relative invariants of standard weighted relative graphs in \cite{HLR}):
\[
\langle p^*\sigma_1,\ldots{},p^*\sigma_n | \mu \rangle_{g,\beta}^{\tilde Y,E},
\]
where $p:\tilde Y\rightarrow Y$ is the contraction.
\end{thm}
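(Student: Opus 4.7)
The proof is essentially an invocation of \cite[Theorem 5.15]{HLR} combined with \cite{MP}. First, I would identify the invariants $\langle p^*\sigma_1,\ldots,p^*\sigma_n\mid\mu\rangle^{\tilde Y,E}_{g,\beta}$ with the relative invariants of \emph{standard weighted relative graphs} as defined in \cite[Definition 5.6]{HLR}; the condition imposed there is precisely that the cohomology classes attached to interior markings are pulled back from $Y$ via $p:\tilde Y\to Y$. Once this identification is in place, \cite[Theorem 5.15]{HLR} asserts that such invariants are determined by (a) absolute Gromov--Witten invariants of $Y$, (b) the pullback morphisms and the Chern classes in (1), (2) of Theorem \ref{knownblowup}, and (c) relative Gromov--Witten invariants of the pair $\bigl(P,\Pp(N_{Z/Y})\bigr)$, where $P=\Pp(N_{Z/Y}\oplus \sO)$.

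Next, I would eliminate ingredient (c) in favor of the absolute invariants of $P$ listed in (3). This is the content of \cite[Lemma 1]{MP}: since $E=\Pp(N_{Z/Y})\subset P$ is the infinity-section of a projective compactification of a vector bundle, the fiberwise torus action on $N_{Z/Y}\oplus\sO$ and virtual localization reduce all relative invariants of $(P,E)$ to the absolute invariants of $P$. Combining the two reductions shows that the relative invariants in question are determined by the data (1), (2), (3) together with the absolute invariants of $Y$.

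The nontrivial part---which lies inside \cite[Theorem 5.15]{HLR} rather than this paper---is the degeneration argument. One degenerates $Y$ to the normal cone of $Z$, obtaining a family over $\A^1$ whose special fiber is $\tilde Y\cup_E P$, and applies the degeneration formula to write absolute invariants of $Y$ as sums of products of relative invariants on the two sides glued along $E$. The crucial compatibility is that each class from $H^*(Y)$ extends to the total space of the degeneration and restricts to the $p^*$-form on $\tilde Y$; this is exactly what forces the unknown $\tilde Y$-relative invariants entering the degeneration sum to be of the desired ``standard'' form and no other. A triangularity argument, induc\-ting on $(g,\beta,\mu)$ with $\mu$ ordered lexicographically by its cohomology-weighted parts, then isolates the target invariant as a leading term with a nonzero, explicitly computable coefficient coming from a fiber/rubber contribution on $P$, while the remaining terms involve either strictly smaller data on the $\tilde Y$-side (handled by induction) or pure $(P,E)$-factors (already reduced via \cite{MP}).

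The main obstacle is precisely this triangularity step: pinning down the leading degeneration term, verifying its coefficient is nonzero, and choosing an ordering on $(\beta,\mu)$ that makes the induction close. I would not reprove this here but invoke it directly from \cite{HLR}; what remains on our side is the routine verification that our insertions match Definition 5.6 there and that the data (1), (2), (3) supply all non-GW inputs needed to run the recursion.
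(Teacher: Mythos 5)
Your proposal is correct and takes essentially the same route as the paper: the paper also treats this statement as a rephrasing of \cite[Theorem 5.15]{HLR} (filtered through \cite[Definition 5.6]{HLR}) combined with the reduction from \cite{MP} of relative invariants of $(\Pp(N_{Z/Y}\oplus\sO),\Pp(N_{Z/Y}))$ to the absolute invariants in item (3), and, like you, it does not reprove the degeneration/triangularity machinery internal to \cite{HLR}.
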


Although insertions are pull-backs from $Y$, we are going to see these relative invariants are enough to determine the absolute Gromov--Witten invariants of $\tilde Y$. Applying deformation to the normal cone, we form $\X=Bl_{E\times \{0\}} \tilde Y\times \mathbb A^1$. The fiber at $0$ is a union of $Y$ and $\Pp_E(\sO(-1) \oplus \sO)$ glued along $E$. Let 
\[
\iota_1:\tilde Y\rightarrow \X, \iota_2:\Pp_E(\sO(-1) \oplus \sO) \rightarrow \X
\]
be the embeddings of corresponding irreducible components to the fiber at $0$. An absolute invariant of $\tilde Y$ can thus be computed by relative invariants of the pairs $(\tilde Y,E)$ and $(\Pp_E(\sO(-1)\oplus \sO), E)$. 

During this process, one has different choices for the insertions of the relative invariants. Given $\bar \alpha\in H^*(\tilde Y)$ an insertion of the absolute invariant of $\tilde Y$, to apply degeneration formula, one needs to find a lifting $\alpha \in H^*(\X)$ whose restriction on a general fiber is $\bar \alpha$. The corresponding insertions for relative invariants of $(\tilde Y,E)$ and $(\Pp_E(\sO(-1)\oplus \sO), E)$ are $\iota_1^*\alpha$ and $\iota_2^*\alpha$, respectively. One can make an obvious choice by finding the $\alpha$ such that $\iota_1^*\alpha=\bar \alpha$ and $\iota_2^*\alpha= \pi^* (\bar\alpha|_E)$ where $\pi:\Pp_E(\sO(-1)\oplus \sO) \rightarrow E$. However, some flexibility exists. One can add to this $\alpha$ a cohomology class whose Poincar\'e dual has support in $E$. Thus, we have
\begin{lem}
Given an $\bar \alpha\in H^*(\tilde Y)$, there exists a lifting $\alpha' \in H^*(\X)$ such that
\begin{align*}
\iota_1^*\alpha&=\bar \alpha + (\iota_E)_! \sigma,\\ \iota_2^*\alpha&= \pi^* (\bar\alpha|_E) - h\pi^* \sigma,
\end{align*}
where $\iota_E:E\rightarrow \tilde Y$ is the inclusion, $\sigma\in H^*(E)$ and $h=c_1(\sO_{\Pp_E(\sO(-1)\oplus \sO)}(1))$.
\end{lem}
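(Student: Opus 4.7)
The plan is to start from an obvious lift of $\bar\alpha$ and then add a correction class supported on the exceptional component $P := \Pp_E(\sO(-1)\oplus\sO)$ of the central fibre. First, I would pick any $\alpha_0 \in H^*(\X)$ with $\iota_1^*\alpha_0 = \bar\alpha$ and $\iota_2^*\alpha_0 = \pi^*(\bar\alpha|_E)$; one such lift is obtained by pulling $\bar\alpha$ back along the composition $\X \to \tilde Y \times \A^1 \to \tilde Y$, using that $\iota_2$ factors through the contraction $P\to E \hookrightarrow \tilde Y$. The remaining freedom in the choice of lift consists precisely of adding classes whose Poincar\'e dual is supported in the central fibre, and to alter the $\iota_2^*$-side cleanly without disturbing anything on a general fibre, I restrict attention to classes supported in $P$ alone.

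My candidate correction is
\[
\delta_\sigma := (\iota_2)_*\pi^*\sigma \in H^*(\X),
\]
the Gysin pushforward along the codimension-one inclusion $\iota_2:P\hookrightarrow \X$. This class is Poincar\'e dual to a cycle inside $P \subset \X_0$, so it restricts to zero on every fibre $\X_t$, $t\neq 0$; hence $\alpha' := \alpha_0 + \delta_\sigma$ is still a legitimate lift of $\bar\alpha$. It then remains to compute the two restrictions of $\delta_\sigma$.

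For $\iota_1^*\delta_\sigma$, I would invoke the clean intersection (base change) formula for the Cartesian square
\[
\begin{tikzcd}
E \arrow[r, hook, "j_P"] \arrow[d, hook, "j_{\tilde Y}"'] & P \arrow[d, hook, "\iota_2"] \\
\tilde Y \arrow[r, hook, "\iota_1"'] & \X,
\end{tikzcd}
\]
which expresses $\tilde Y \cap P = E$ as a transverse intersection of two smooth divisors in the smooth ambient $\X$. This yields $\iota_1^*\delta_\sigma = (j_{\tilde Y})_*(j_P)^*\pi^*\sigma$. Since the deformation-to-the-normal-cone construction $\X = Bl_{E\times\{0\}}(\tilde Y\times\A^1)$ exhibits $\tilde Y$ as meeting $P$ along the section of $\pi: P \to E$ coming from the $\sO$-summand of $\sO(-1)\oplus\sO$, the composition $\pi \circ j_P$ is the identity on $E$; therefore $(j_P)^*\pi^*\sigma = \sigma$ and $\iota_1^*\delta_\sigma = (j_{\tilde Y})_*\sigma = (\iota_E)_!\sigma$, as required.

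For $\iota_2^*\delta_\sigma$, the self-intersection formula for the smooth divisor $P\subset \X$ gives $\iota_2^*\delta_\sigma = \pi^*\sigma \cup c_1(N_{P/\X})$. The essential input is that $P$ is the exceptional divisor of the smooth blow-up $\X \to \tilde Y \times \A^1$, so $N_{P/\X} = \sO_P(-1)$ and $c_1(N_{P/\X}) = -h$; this produces $\iota_2^*\delta_\sigma = -h\pi^*\sigma$, and adding to $\alpha_0$ yields the two formulas in the lemma. The step I expect to need the most care is the sign bookkeeping: verifying that $\tilde Y \cap P$ really is the ``strict transform'' section (along which $\pi|_E = \mathrm{id}_E$) rather than the other section, and that the normal bundle of the exceptional divisor contributes $-h$ and not $+h$. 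Both are routine once one unpacks the blow-up in local coordinates, but they are the only non-formal ingredients in the argument.
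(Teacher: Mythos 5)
Your argument is correct, and it makes rigorous what the paper only sketches: the paper introduces the lemma after a one-sentence remark that one may add to the obvious lift a class ``whose Poincar\'e dual has support in $E$,'' with no formulas computed. Your version pins down the correction as $\delta_\sigma=(\iota_2)_*\pi^*\sigma$, observes that this restricts to zero on any fibre $\X_t$, $t\neq 0$, and derives the two stated identities from the clean-intersection (base change) formula across the Cartesian square $E=\tilde Y\cap P$ and from the self-intersection formula $\iota_2^*(\iota_2)_*=\,\cdot\,\cup\, c_1(N_{P/\X})$ with $N_{P/\X}=\sO_P(-1)$. This is the same approach as the paper, carried out in full.

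One small slip worth fixing: in the deformation to the normal cone $\X=Bl_{E\times\{0\}}(\tilde Y\times\A^1)$, the strict transform of $\tilde Y\times\{0\}$ meets the exceptional divisor $P=\Pp_E(N_{E/\tilde Y}\oplus\sO)$ along $\Pp_E(N_{E/\tilde Y})=\Pp_E(\sO(-1))$, i.e.\ the section determined by the normal-bundle summand $\sO(-1)$, not the one determined by the trivial summand $\sO$ as you wrote. As you yourself anticipate, this does not affect the computation, since either section $j_P$ satisfies $\pi\circ j_P=\mathrm{id}_E$ and hence $(j_P)^*\pi^*\sigma=\sigma$; but the geometry is cleaner once the correct section is named. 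The sign $c_1(N_{P/\X})=-h$ is right: $P$ is the exceptional divisor of a smooth blow-up, so its normal bundle is the tautological subbundle $\sO_P(-1)$, and with the paper's choice $h=c_1(\sO_P(1))$ this gives $-h$.
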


The goal of this section is achieved if we can prove there exists a $\sigma$ such that $\bar \alpha + (\iota_E)_! \sigma$ is the pull-back of a certain class along $p$. But notice that $\bar \alpha - p^*p_! \bar \alpha = (\iota_E)_! \sigma'$ for some $\sigma' \in H^*(E)$. Choosing $\sigma=\sigma'$ is enough. Therefore,Theorem \ref{knownblowup} is proven.

\newpage
\bibliographystyle{amsxport}
\bibliography{../universal-BIB}

\end{document}